\newcommand{\Xcomment}[1]{}
\newtheorem{theorem}{Theorem}[section]
\newtheorem{lemma}[theorem]{Lemma}
\newtheorem{corollary}[theorem]{Corollary}
\newtheorem{prop}[theorem]{Proposition}
\newcommand{\SEC}[1]{\ref{sec:#1}}  
\makeatletter \@addtoreset{equation}{section} \makeatother
\newenvironment{proof}{\noindent{\underline{\sl Proof.}}~}%
{\hfill$\qed$\medskip}
\def\qed{ \ \vrule width.1cm height.3cm depth0cm}
\newenvironment{numitem1}{\refstepcounter{equation}\begin{enumerate}%
\item[(\thesection.\arabic{equation})]}{\end{enumerate}}
\newcommand{\refeq}[1]{(\ref{eq:#1})}  
\renewcommand{\section}{\@startsection{section}{1}{0pt}%
{-3.5ex plus -1ex minus -.2ex}{2.3ex plus .2ex}%
{\normalfont\Large}}
\renewcommand{\subsection}{\@startsection{subsection}{2}{0pt}%
{-3.0ex plus -1ex minus -.2ex}{1.5ex plus .2ex}%
{\normalfont\normalsize\bf}}
\renewcommand{\subsubsection}{\@startsection{subsubsection}{2}{0pt}%
{-2.0ex plus -1ex minus -.2ex}{-2.0ex plus .2ex}%
{\normalfont\normalsize\underline}}
\def\Rset{{\mathbb R}}
\def\Zset{{\mathbb Z}}
\def\Ascr{{\cal A}}
\def\Dscr{{\cal D}}
\def\Escr{{\cal E}}
\def\Fscr{{\cal F}}
\def\Hscr{{\cal H}}
\def\Lscr{{\cal L}}
\def\Mscr{{\cal M}}
\def\Pscr{{\cal P}}
\def\Rscr{{\cal R}}
\def\Xscr{{\cal X}}
\def\tilde{\widetilde}
\def\hat{\widehat}
\def\bar{\overline}
\def\eps{\varepsilon}
\def\dist{{\rm dist}}
\def\inter{{\rm int}}
\begin{document}

\title{A combinatorial algorithm for the planar multiflow problem with demands
located on three holes}

\author
{
    Maxim A. Babenko
    \thanks {
        Higher School of Economics;
        20, Myasnitskaya, 101000 Moscow, Russia;
        email: \texttt{maxim.babenko@gmail.com}.
    }
    \and
    Alexander V. Karzanov
    \thanks {
        Institute for System Analysis of the RAS;
        9, Prospect 60 Let Oktyabrya, 117312 Moscow, Russia;
        email: \texttt{sasha@cs.isa.ru}.
    }
}

\date{}

\maketitle

 \begin{abstract}
We consider an undirected multi(commodity)flow demand problem in which a supply
graph is planar, each source-sink pair is located on one of three specified
faces of the graph, and the capacities and demands are integer-valued and
Eulerian. It is known that such a problem has a solution if the cut and
(2,3)-metric conditions hold, and that the solvability implies the existence of
an integer solution. We develop a purely combinatorial strongly polynomial
solution algorithm.
 \end{abstract}

\bigskip
\noindent \emph{Keywords}: Multi(commodity)flow, planar graph, cut condition,
(2,3)-metric condition \smallskip

\noindent {\em AMS Subject Classification}: 90C27, 05C10, 05C21, 05C85


\section{Introduction} \label{sec:intr}

Among a variety of multi(commodity)flow problems, one popular class embraces
multiflow demand problems in undirected planar graphs in which the demand pairs
are located within specified faces of the graph. More precisely, a problem
input consists of: a planar graph $G=(V,E)$ with a fixed embedding in the
plane; nonnegative integer \emph{capacities} $c(e)\in\Zset_+$ of edges $e\in
E$; a subset $\Hscr\subseteq \Fscr_G$ of faces, called \emph{holes} (where
$\Fscr_G$ is the set of faces of $G$); a set $D$ of pairs $st$ of vertices such
that both $s,t$ are located on (the boundary of) one of the holes; and
\emph{demands} $d(st)\in\Zset_+$ for $st\in D$. A \emph{multiflow} for $G,D$ is
meant to be a pair $f=(\Pscr,\lambda)$ consisting of a set $\Pscr$ of $D$-paths
$P$ in $G$ and nonnegative real weights $\lambda(P)\in\Rset_+$. Here a path $P$
is called a $D$-\emph{path} if $\{s_P,t_P\}=\{s,t\}$ for some $st\in D$, where
$s_P$ and $t_P$ are the first and last vertices of $P$, respectively. We call
$f$ \emph{admissible} for $c,d$ if it satisfies the capacity constraints:
  \begin{equation} \label{eq:capac}
  \sum\Bigl(\lambda(P)\colon e\in P\in \Pscr\Bigr)\le c(e),\qquad e\in E,
   \end{equation}
and realizes the demands:
  \begin{equation} \label{eq:demand}
  \sum\Bigl(\lambda(P)\colon P\in\Pscr,\> \{s_P,t_P\}=\{s,t\}\Bigr)=d(st),
\qquad st\in D.
  \end{equation}

The (fractional) \emph{demand problem}, denoted as $\Dscr(G,\Hscr,D,c,d)$, or
$\Dscr(c,d)$ for short, is to find an admissible multiflow for $c,d$ (or to
declare that there is none). When the number of holes is ``small'', this linear
program is known to possess nice properties. To recall them, we need some
terminology and notation.

For $X\subseteq V$, the set of edges of $G$ with one end in $X$ and the other
in $V-X$ is denoted by $\delta(X)=\delta_G(X)$ and called the \emph{cut} in $G$
determined by $X$. We also denote by $\rho(X)=\rho_D(X)$ the set of pairs
$st\in D$ \emph{separated} by $X$, i.e., such that $|\{s,t\}\cap X|=1$. For a
singleton $v$, we write $\delta(v)$ for $\delta(\{v\})$, and $\rho(v)$ for
$\rho(\{v\})$. For a function $g:S\to\Rset$ and a subset $S'\subseteq S$,
~$g(S')$ denotes $\sum(g(e)\colon e\in S')$. So $c(\delta(X))$ is the capacity
of the cut $\delta(X)$, and $d(\rho(X))$ is the total demand on the elements of
$D$ separated by $X$.

A capacity-demand pair $(c,d)$ is said to be \emph{Eulerian} if
$c(\delta(v))-d(\rho(v))$ is even for all vertices $v\in V$.

The simplest sort of necessary conditions for the solvability of problem
$\Dscr(c,d)$ (with any $G,D$) is the well-known \emph{cut condition}:
    \begin{equation} \label{eq:cut_cond}
  \Delta_{c,d}(X):=c(\delta(X))-d(\rho(X))\ge 0
   \end{equation}
should hold for all $X\subset V$. It need not be sufficient, and in general the
solvability of a multiflow demand problem is provided by metric conditions. In
our case the following results have been obtained. \medskip

(A) For $|\Hscr|=1$, Okamura and Seymour~\cite{OS} showed that the cut
condition is sufficient, and that if $(c,d)$ is Eulerian and the problem
$\Dscr(c,d)$ has a solution, then it has an integer solution, i.e., there
exists an admissible multiflow $(\Pscr,\lambda)$ with $\lambda$ integer-valued.
Okamura~\cite{ok} showed that these properties continue to hold if $|\Hscr|=2$.
\medskip

(B) For $|\Hscr|=3$, the cut condition becomes not sufficient and the
solvability criterion involves also the so-called \emph{(2,3)-metric
condition}. It is related to a map $\sigma:V\to V(K_{2,3})$, where $K_{p,q}$ is
the complete bipartite graph with parts of $p$ and $q$ vertices. Such a
$\sigma$ defines the \emph{metric} $m=m^\sigma$ on $V$ by
$m(u,v):=\dist(\sigma(u),\sigma(v))$, $u,v\in V$, where $\dist$ denotes the
distance (the shortest path length) between vertices in $K_{2,3}$. It gives a
partition of $V$ into five sets (with distances 1 or 2 between them), and $m$
is said to be a \emph{(2,3)-metric} on $V$. We denote $\sum(c(e)m(e)\colon e\in
E)$ by $c(m)$, and $\sum(d(st)m(st)\colon st\in D)$ by $d(m)$. Karzanov showed
the following.
  \begin{theorem}[\cite{K3hII}] \label{tm:3hole}
Let $|\Hscr|=3$. Then $\Dscr(c,d)$ has a solution if and only if the cut
condition~\refeq{cut_cond} holds and
  \begin{equation} \label{eq:23metr}
  \Delta_{c,d}(m):=c(m)-d(m)\ge 0
  \end{equation}
holds for all (2,3)-metrics $m$ on $V$ (the \emph{(2,3)-metric condition}).
Furthermore, if $(c,d)$ is Eulerian and the problem $\Dscr(c,d)$ has a
solution, then it has an integer solution.
  \end{theorem}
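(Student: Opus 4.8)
\medskip
\noindent\textbf{Proof plan.}
I would dispose of the ``only if'' direction first, since it is routine: both the cut condition and the $(2,3)$-metric condition are instances of a single metric inequality. For any metric $m$ on $V$ and any $D$-path $P$ joining $s$ and $t$ one has $\sum(m(e)\colon e\in P)\ge m(s,t)$ by the triangle inequality, so for an admissible multiflow $f=(\Pscr,\lambda)$,
\[
d(m)\ \le\ \sum_{P\in\Pscr}\lambda(P)\sum_{e\in P}m(e)\ =\ \sum_{e\in E}m(e)\sum_{P\ni e}\lambda(P)\ \le\ c(m),
\]
the first inequality using \refeq{demand} and the triangle inequality, the last using \refeq{capac}. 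Taking $m$ to be the cut metric of a set $X\subseteq V$ (value $1$ on pairs separated by $X$, value $0$ otherwise) gives \refeq{cut_cond}; taking $m=m^\sigma$ gives \refeq{23metr}. Hence all the substance lies in the converse and in the integrality assertion.

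For the converse I would invoke linear-programming duality. By Farkas' lemma, together with the standard fact that the extremal dual certificates for a multiflow feasibility problem may be taken to be metrics, $\Dscr(c,d)$ is \emph{infeasible} precisely when some metric $m$ on $V$ satisfies $c(m)<d(m)$; the relevant metrics form a polyhedral cone, so it suffices to verify $c(m)\ge d(m)$ on its extreme rays. The crux is therefore a classification: \emph{when every demand pair lies on the boundary of one of three fixed faces of $G$, every extreme ray of the relevant metric cone is generated either by a cut metric or by a $(2,3)$-metric $m^\sigma$.} I would establish this using the fixed planar embedding — taking an isometric graph realization of an extreme metric $m$ that is not a cut, examining how its level sets interact with the cyclic orders of the $D$-terminals on the three holes, and showing by an uncrossing/topological argument, in the spirit of Seymour's and Okamura's treatment of the one- and two-hole cases, that the only configuration compatible with three holes is the five-class partition realizing $K_{2,3}$. (One might instead try to reduce the three-hole instance to a one-hole instance on a branched cover of $G$ and apply (A), but arranging that the cut condition upstairs encodes exactly the cut \emph{and} $(2,3)$-metric conditions downstairs appears to require the same analysis.) This classification carries essentially all the difficulty, and it is the step I expect to be the main obstacle.

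Granting feasibility, I would obtain an integral solution in the Eulerian case by induction on $c(E)$ (equivalently on $d(D)$). If $c(E)=0$ every demand is $0$ and nothing is needed. Otherwise one uses the Eulerian hypothesis together with the cut and $(2,3)$-metric conditions to pick a $D$-path $P$ realizing some $st\in D$ with $d(st)>0$ such that the instance obtained by replacing $c(e)$ with $c(e)-1$ for each $e\in P$ and $d(st)$ with $d(st)-1$ is again Eulerian — the parity of $c(\delta(v))-d(\rho(v))$ is unchanged at every $v$, since $P$ meets $\delta(v)$ in $0$ or $2$ edges when $v\notin\{s,t\}$, while the decrement of $d(st)$ offsets the change at $s$ and at $t$ — and still satisfies \refeq{cut_cond} and \refeq{23metr}. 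By the feasibility criterion just proved the reduced instance is solvable, so by induction it has an integral solution, and adjoining $P$ with weight $1$ produces an integral solution of $\Dscr(c,d)$. The entire burden here is the \emph{routing lemma}: that such a $P$ can always be chosen keeping every tight cut and every tight $(2,3)$-metric nonnegative. This is proved by a case analysis organized by the tight-metric structure, extending the Okamura--Seymour choice of a ``good'' terminal pair on a single hole to the three-hole setting; after the feasibility classification it is the most substantial remaining point.
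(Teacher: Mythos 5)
There is nothing in this paper to compare your argument against: Theorem~\ref{tm:3hole} is quoted from Karzanov's two-part work \cite{K3hI,K3hII} and is not proved here at all; the paper only refines it (Theorem~\ref{tm:3h_sharp}) and builds an algorithm on top of it. Judged on its own, your proposal proves only the routine half. The ``only if'' direction via the metric inequality is correct. But both essential steps are announced rather than proved, and they are precisely where the entire content of the theorem lies: (1) the claim that for a planar $G$ with all demand pairs on three holes the metric criterion reduces to cuts and $(2,3)$-metrics, and (2) the routing lemma that in the Eulerian case one can peel off a unit path while preserving the cut and $(2,3)$-metric conditions. You flag both as ``the main obstacle'' and ``the most substantial remaining point'', which is an accurate self-assessment; together they occupy the whole of \cite{K3hI} (metrics) and \cite{K3hII} (paths), so the proposal is a roadmap of the known proof architecture, not a proof.

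One formulation in step (1) would also fail as stated: it is not true that the extreme rays of the metric cone on $V$ are generated by cut metrics and $(2,3)$-metrics --- already on five or more points the cone has many other extreme rays. The correct statement, and the one that actually needs the planar embedding and the location of $D$ on three faces, is a dominance result: any metric violating $c(m)\ge d(m)$ can be replaced, without increasing $c(m)-d(m)$ on the pairs of $D$, by a nonnegative combination of cut metrics and $(2,3)$-metrics (in Karzanov's treatment, by the finite list of ``primitive'' metrics for the three-hole frame, which for three holes are exactly cuts and $(2,3)$-metrics). Your appeal to ``the relevant metric cone'' hides this reduction, and proving it is the classification you defer. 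Similarly, in step (2) the parity bookkeeping is fine, but the existence of a path whose removal keeps every tight cut \emph{and} every tight $(2,3)$-metric nonnegative is exactly the hard uncrossing analysis; note also that the present paper's algorithm deliberately avoids path-peeling and instead reduces capacity on a single boundary edge while rerouting demands (the $(e,st,\eps)$-reduction of Section~\SEC{alg}), precisely because controlling all tight metrics along an entire path is delicate.
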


We call $\Delta_{c,d}(X)$ in~\refeq{cut_cond} (resp. $\Delta_{c,d}(m)$
in~\refeq{23metr}) the \emph{excess} of a set $X$ (resp. a (2,3)-metric $m$)
w.r.t. $c,d$. One easily shows that $\Delta_{c,d}(X)$ and $\Delta_{c,d}(m)$ are
even if $(c,d)$ is Eulerian.
\medskip

(C) When $|\Hscr|=4$, the situation becomes more involved. As is shown
in~\cite{K4h}, the solvability criterion for $\Dscr(c,d)$ involves, besides
cuts and (2,3)-metrics, metrics $m=m^\sigma$ on $V$ induced by maps
$\sigma:V\to V(\Gamma)$ with $\Gamma$ running over a set of planar graphs with
four faces, and merely the existence of a half-integer solution is guaranteed
in a solvable Eulerian case. When $|\Hscr|=5$, the set of unavoidable metrics
in the solvability criterion becomes ugly (see~\cite[Sec.~4]{K3hI}), and the
fractionality status is unknown so far.
\medskip

In this paper we focus on algorithmic aspects. The first combinatorial strongly
polynomial algorithm (having complexity $O(n^3\log n)$) to find an integer
solution in the Eulerian case with $|\Hscr|=1$ is due to Frank~\cite{frank},
and subsequently a number of faster algorithms have been devised; a linear-time
algorithm is given in~\cite{WW}. Hereinafter $n$ stands for the number $|V|$ of
vertices of the graph. Efficient algorithms for $|\Hscr|=2$ are known as well.
For a survey and references in cases $|\Hscr|=1,2$, see, e.g.,~\cite{sch}.

Our aim is to give an algorithm to solve problem $\Dscr(c,d)$ with $|\Hscr|=3$,
which checks the solvability and finds an integer admissible multiflow in the
Eulerian case. Our algorithm uses merely combinatorial means and is strongly
polynomial (though having a high polynomial degree). It is based on a
subroutine for a certain planar version of the (2,3)-metric minimization
problem. We explain how to solve the latter efficiently and in a combinatorial
fashion, by reducing it to a series of shortest paths problems in a dual planar
graph.
\medskip

\noindent\textbf{Remark 1.} The (2,3)-metric minimization problem in a general
edge-weighted graph with a specified set of five terminals can be solved in
strongly polynomial time (by use of the ellipsoid method)~\cite{K87} or by a
combinatorial weakly polynomial algorithm~\cite{K98}. \medskip

This paper is organized as follows. Section~\SEC{prelim} reviews facts
from~\cite{K3hII} refining the structure of cuts and (2,3)-metrics that are
essential for the solvability of our 3-hole demand problem. Using these
refinements, Sections~\SEC{veri_cut} and~\SEC{veri_23} develop efficient
combinatorial procedures to verify cut and (2,3)-metric conditions for problem
$\Dscr(c,d)$ with initial or current $c,d$; moreover, these procedures
determine or duly estimate the minimum excesses of regular cuts and
(2,3)-metrics, which is important for the efficiency of our algorithm for
$\Dscr(c,d)$. This algorithm is described in Section~\SEC{alg}.
\medskip

To slightly simplify the further description, we will assume, w.l.o.g., that
the boundary of any hole $H$ contains no isthmus. For if $b(H)$ has an isthmus
$e$, we can examine the cut $\{e\}$. If it violates the cut condition, the
problem $\Dscr(c,d)$ has no solution. Otherwise $\Dscr(c,d)$ is reduced to two
smaller demand problems, with at most 3 holes and with Eulerian data each, by
deleting $e$ and modifying demands concerning $H$.


\section{Preliminaries} \label{sec:prelim}

Throughout the rest of the paper, we deal with $G=(V,E),\Hscr,D,c,d$ as above
such that $|\Hscr|=3$ and $(c,d)$ is Eulerian. Let $\Hscr=\{H_1,H_2,H_3\}$.

One may assume that the graph $G=(V,E)$ is connected and its outer (unbounded)
face is a hole (say, $H_3$). We identify objects in $G$, such as edges, paths,
subgraphs, and etc., with their images in the plane. A face $F\in\Fscr_G$ is
regarded as an open region in the plane. Since $G$ is connected, the boundary
$b(F)$ of $F$ is connected, and we identify it with the corresponding cycle
(closed path) considered up to reversing and shifting cyclically. Note that
this cycle may contain repeated vertices or edges (an edge of $G$ may be passed
by $b(F)$ twice, in different directions). A subpath in this cycle is called a
\emph{segment} in $b(F)$.

We denote the subgraph of $G$ induced by a set $X\subseteq V$ by $[X]=[X]_G$,
the set of faces of $G$ whose boundary is entirely contained in $[X]$ by
$\Fscr(X)$, and the region in the plane that is the union of $[X]$ and all
faces in $\Fscr(X)$ by $\Rscr(X)$. We also need additional terminology and
notation.
\smallskip

A subset $X\subset V$ (as well as the cut $\delta(X)$) is called \emph{regular}
if the region $\Rscr(X)$ is simply connected (i.e., it is connected and any
closed curve in it can be continuously deformed into a point), and for each
$i=1,2,3$, ~$[X]\cap \/b(H_i)$ forms a segment of $b(H_i)$. In particular, the
subgraph $[X]$ is connected.
\smallskip

Let $\{t_1,t_2\}$ and $\{s_1,s_2,s_3\}$ be the parts (color classes) in
$K_{2,3}$. Given $\sigma:V\to V(K_{2,3})$, we denote the set $\sigma^{-1}(t_i)$
by $T_i=T_i^\sigma$, and $\sigma^{-1}(s_j)$ by $S_j=S_j^\sigma$. Then
$\Xi^\sigma=(T_1,T_2,S_1,S_2,S_3)$ is a partition of $V$. The (2,3)-metric
$m^\sigma$ is called \emph{regular} if:
  \begin{numitem1}
  \begin{itemize}
\item[(i)] all sets $T_1,T_2,S_1,S_2,S_3$ in $\Xi^\sigma$  are nonempty;
\item[(ii)] for $i=1,2,3$, the region $\Rscr(S_i)$ is
simply connected;
\item[(iii)] for $i,j\in\{1,2,3\}$,  ~$S_i\cap \/b(H_j)=\emptyset$ holds if and only if
$i=j$; and for $i\ne j$, ~$[S_i]\cap \/b(H_j)$ forms a segment of $b(H_j)$.
 \end{itemize}
 \label{eq:reg23}
  \end{numitem1}
Then the complement to $\Rset^2$ of $H_1\cup H_2\cup H_3\cup
\Rscr(S_1)\cup\Rscr(S_2)\cup\Rscr(S_3)$ consists of two connected components,
one containing $T_1$ and the other containing $T_2$. The structure described
in~\refeq{reg23} is illustrated in the picture.

\vspace{-0.3cm}
\begin{center}
\includegraphics{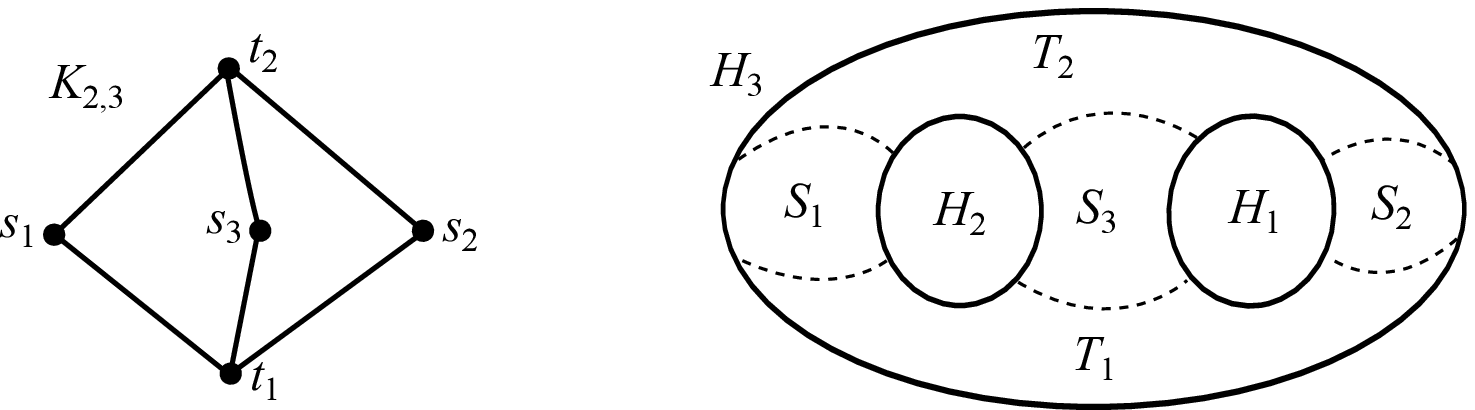}
\end{center}
\vspace{-0.3cm}

The notions of regular sets (cuts) and (2,3)-metric are justified by the
following important strengthening of the first assertion in
Theorem~\ref{tm:3hole} (cf.~\cite{K3hII}).
  \begin{theorem} \label{tm:3h_sharp}
$\Dscr(c,d)$ has a solution if and only if cut condition~\refeq{cut_cond} holds
for all regular subsets $X\subset V$ and (2,3)-metric condition~\refeq{23metr}
holds for all regular (2,3)-metrics on $V$.
  \end{theorem}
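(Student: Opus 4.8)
The plan is to deduce Theorem~\ref{tm:3h_sharp} from Theorem~\ref{tm:3hole} by showing that, among all violators of the two conditions, one can always find a \emph{regular} one. Since every regular cut and every regular $(2,3)$-metric is a particular instance of a cut and a $(2,3)$-metric, the ``only if'' direction is immediate from Theorem~\ref{tm:3hole}; all the work is in the ``if'' direction. So suppose $\Dscr(c,d)$ has no solution; by Theorem~\ref{tm:3hole} there is either a set $X$ with $\Delta_{c,d}(X)<0$ or a $(2,3)$-metric $m$ with $\Delta_{c,d}(m)<0$. I would set up an \emph{uncrossing / reduction} argument: take such a violator that is extremal with respect to a suitable complexity measure (e.g.\ lexicographically minimizing, among all minimum-excess violators, a tuple such as the number of faces not in $\Rscr(X)$, then the number of connected components of $[X]$, then the number of ``breaks'' of $[X]\cap b(H_i)$ along each hole boundary), and argue that such an extremal violator must already be regular.

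The key steps, in order, are as follows. First, for the cut case: given a violating $X$ that is not regular, identify which clause of regularity fails. If $\Rscr(X)$ is not simply connected, some bounded face $F\notin\Fscr(X)$ is ``enclosed'' by $\Rscr(X)$; then replacing $X$ by $X\cup (V\cap \bar F)$ (adding the vertices on $b(F)$ and in the enclosed region) does not increase $c(\delta(X))$ — edges on or inside $b(F)$ leave the cut — and does not increase $d(\rho(X))$, because no demand pair $st\in D$ can have exactly one endpoint inside an enclosed region: both $s,t$ lie on some hole $H_k$, and a hole is a face, so it cannot be among the enclosed faces. Hence the excess does not increase and the complexity measure strictly drops, contradicting extremality. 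If instead $[X]\cap b(H_i)$ fails to be a single segment for some $i$, I would ``fill in'' the gaps along $b(H_i)$: adding to $X$ the vertices of an intermediate segment of $b(H_i)$ again cannot increase the cut capacity (edges of $b(H_i)$ in the gap, as well as chords, are removed from $\delta(X)$) and cannot increase the separated demand (the hole $H_i$ being a face, filling a boundary segment only merges endpoints of $H_i$-pairs onto the same side). Symmetrically, disconnectedness of $[X]$ or failure of simple connectivity of the components is handled by taking the component of $[X]$ carrying the ``most demand imbalance'' and filling its enclosed faces; a parity/averaging argument (the excess is even, and $\Delta_{c,d}$ is ``submodular-like'' under this splitting) shows one of the pieces is still a violator.

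Second, the $(2,3)$-metric case is analogous but with five regions to control simultaneously. Writing $m=m^\sigma$ with partition $\Xi^\sigma=(T_1,T_2,S_1,S_2,S_3)$, I would first arrange (i): if some class is empty, $m^\sigma$ degenerates to a cut metric or a smaller-complexity $(2,3)$-metric, reducing to the already-treated case. For (ii), if $\Rscr(S_i)$ is not simply connected, absorb each enclosed face $F$ into $S_i$ — since $d(F\text{-enclosed pairs})$ is again $0$ (holes are faces), and reassigning $\sigma$ on those vertices to $s_i$ only shortens $\sigma$-images of edges and of demand pairs, $c(m)$ does not increase and $d(m)$ does not decrease, so $\Delta_{c,d}(m)$ does not increase. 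For (iii), the incidence pattern $S_i\cap b(H_j)=\emptyset\iff i=j$ and the segment property are enforced by the same boundary-filling move along each $b(H_j)$, using that each demand pair sits on a single hole so that shifting a boundary segment of $b(H_j)$ between two $S$-classes changes $d(m)$ monotonically in the favorable direction while $c(m)$ stays controlled. Finally, one checks that after these normalizations the two components of $\Rset^2\setminus(H_1\cup H_2\cup H_3\cup\Rscr(S_1)\cup\Rscr(S_2)\cup\Rscr(S_3))$ are exactly the ``$T_1$-side'' and ``$T_2$-side'', i.e.\ the configuration in the displayed picture is forced, so the resulting $m$ is regular and still violates~\refeq{23metr}, contradicting extremality.

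The main obstacle I expect is the bookkeeping in the $(2,3)$-metric case when \emph{several} clauses of~\refeq{reg23} fail at once and the ``repair'' of one clause can temporarily break another: one must choose the complexity measure (and the order in which the five regions and three holes are processed) carefully enough that each elementary move strictly decreases it, so the induction terminates. The cleaner way is probably to prove a single structural lemma — ``any $(2,3)$-metric can be replaced by a regular one with no larger excess and no larger complexity'' — by a potential-function descent, rather than case analysis; verifying that the potential strictly drops under the face-absorption and boundary-filling moves, uniformly over all failure patterns, is the delicate point. A secondary subtlety is the parity argument used to pick the violating piece when splitting a disconnected $[X]$ (or a $\Rscr(S_i)$ with several components): one uses that $(c,d)$ is Eulerian, hence all excesses are even, to avoid the piece-excesses being ``$-1$ and $+1$'' and instead conclude that if the total is negative then some integer piece is $\le -2<0$.
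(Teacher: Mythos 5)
Your ``only if'' direction and the general strategy (reduce an arbitrary violator from Theorem~\ref{tm:3hole} to a regular one) are reasonable, and your cut-case moves are essentially the same observation the paper itself makes in Remark~2: a non-regular $X$ yields $X',X''$ with $\delta(X')\cap\delta(X'')=\emptyset$, $\delta(X')\cup\delta(X'')\subseteq\delta(X)$ and $\rho(X)\subseteq\rho(X')\cup\rho(X'')$, so non-regular cuts are redundant. (One local flaw: your justification that no demand pair has exactly one endpoint in an enclosed region because ``a hole cannot be among the enclosed faces'' is wrong --- $\Rscr(X)$ can be an annulus surrounding a hole; the conclusion you need survives only because in that case the whole boundary of the enclosed hole lies in $X\cup Y$, so absorption creates no newly separated pair.) Note also that the paper does not reprove this theorem: it imports the criterion from \cite[Stat.~2.1]{K3hII}, where the metric part is stated for the smaller class of ``proper'' (2,3)-metrics, and only adds the cut-redundancy remark.

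The genuine gap is in the (2,3)-metric normalization, which is the real content of the statement. Your key monotonicity claims are unproven and, as stated, false. Absorbing a region enclosed by $\Rscr(S_i)$ into $S_i$ can strictly \emph{decrease} $d(m)$: if the enclosed region contains a hole $H_k$ (which is possible, exactly as in the cut case), every demand pair on $b(H_k)$ that had positive $m$-distance drops to distance $0$, while $c(m)$ may decrease only by the small capacity of the edges incident to the enclosed vertices; hence $\Delta_{c,d}(m)$ can increase and the violation can be lost, so your extremal-violator/potential-descent argument produces no contradiction at this step. Similarly, the boundary-shifting move meant to enforce (iii) of~\refeq{reg23} (in particular $S_i\cap b(H_i)=\emptyset$) is justified only by the assertion that $d(m)$ ``changes monotonically in the favorable direction,'' which is not true for a single local reassignment in general: moving a segment of $b(H_j)$ between classes changes both $c(m)$ and $d(m)$ in either direction. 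The ``single structural lemma'' you propose to fall back on --- every violating (2,3)-metric can be replaced by a regular one, or by violating cuts, of no larger excess --- is precisely the nontrivial theorem established in \cite{K3hI,K3hII}; the paper cites it, and your sketch presupposes it rather than proving it. As written, the ``if'' direction is therefore not established.
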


\noindent\textbf{Remark 2.} In fact, the refined solvability criterion for
$\Dscr(c,d)$ given in~\cite[Stat.~2.1]{K3hII} involves a slightly smaller set
of (2,3)-metrics (called \emph{proper} there) than that defined
by~\refeq{reg23}; also it does not specify a collection of cuts. Note, however,
that if $X\subset V$ is not regular, then one can easily find nonempty sets
$X',X''\subset V$ such that $\delta(X')\cap\delta(X'')=\emptyset$,
~$\delta(X')\cup\delta(X'')\subseteq \delta(X)$, and $\rho(X)\subseteq
\rho(X')\cup \rho(X'')$. This implies that $X$ is redundant (it can be excluded
from verification of~\refeq{cut_cond}).
\medskip


\section{Verifying the cut condition} \label{sec:veri_cut}

In this and next sections we describe efficient procedures for checking the
solvability of $\Dscr(G,\Hscr,D,c,d)$ (concerning the initial or current data).
By Theorem~\ref{tm:3h_sharp}, it suffices to verify validity of cut
condition~\refeq{cut_cond} for regular sets and (2,3)-metric
condition~\refeq{23metr} for regular (2,3)-metrics. We reduce both problems to
ones on shortest paths in a certain dual graph. Moreover, on this way we shall
obtain certain lower bounds on the minimum excesses of regular sets and regular
(2,3)-metrics, which are crucial for our algorithm.

The dual graph needed to us is constructed as follows. First we take the
standard planar dual graph $G^\ast=(V^\ast, E^\ast)$ of $G$, i.e., $V^\ast$ is
bijective to $\Fscr_G$ and $E^\ast$ is bijective to $E$, defined by
$F\in\Fscr_G\mapsto v_F\in V^\ast$ and $e\in E\mapsto e^\ast\in E^\ast$, where
a dual edge $e^\ast$ connects vertices $v_F$ and $v_{F'}$ if $F,F'$ are the
faces whose boundaries contain $e$ (possibly $F=F'$). (Usually one assumes that
$v_F$ is a point in $F$ and that $e^\ast$ crosses $e$.) We also denote the
vertex of $G^\ast$ corresponding to a hole $H_i$ by $z_i$.

Then we slightly modify $G^\ast$ as follows. For $i=1,2,3$, let $E_i$ denote
the sequence of edges of the cycle $b(H_i)$. (Recall that $b(H_i)$ has no
isthmus, as mentioned in the Introduction; hence all edges in $E_i$ are
different.) Then the dual vertex $z_i$ has degree $|E_i|$ and is incident with
the dual edges $e^\ast$ for $e\in E_i$. We split $z_i$ into $|E_i|$ vertices
$z_{i,e}$ of degree 1 each, where for $e\in E_i$, the end $z_i$ of $e^\ast$ is
replaced by $z_{i,e}$. These pendant vertices are called \emph{terminals}, they
belong to the boundary of the same face, denoted as $\hat H_i$, and the set of
these terminals ordered clockwise around $\hat H_i$ is denoted by $Z_i$.

The resulting graph is just the desired dual graph for $(G,\Hscr)$, denoted as
$\hat G^\ast$. An example of transforming $G$ into $\hat G^\ast$ in a
neighborhood of a hole $H_i$ is illustrated in the picture, where $A,\ldots,F$
are faces in $G$, and the terminals in $b(\hat H_i)$ are indicated by big
circles.

\vspace{-0.3cm}
\begin{center}
\includegraphics{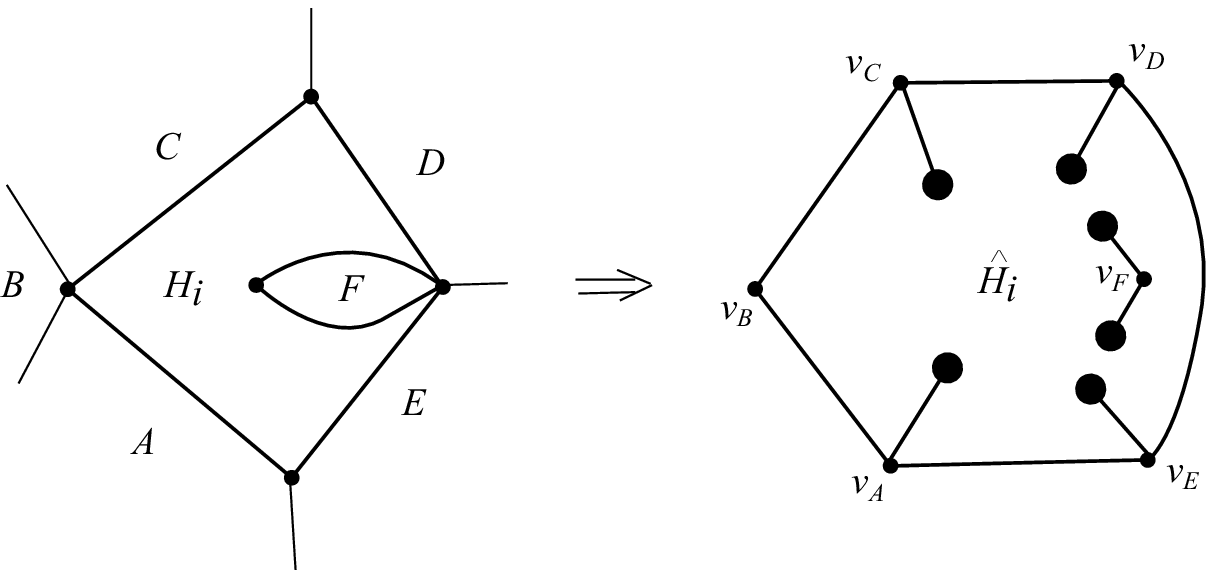}
\end{center}
\vspace{-0.3cm}

The edges of $\hat G^\ast$ have \emph{lengths} inherited from the capacities in
$G$, namely, we define $c(e^\ast):=c(e)$ for $e\in E$.
\smallskip

The rest of this section is devoted to verifying the cut condition and
estimating the minimum excesses of regular sets. \medskip

\noindent\textbf{Remark 3.} Alternatively, one can deal with subsets $X\subset
V$ subject to the only condition that for $i=1,2,3$, ~$[X]\cap b(H_i)$ is a
segment of $b(H_i)$; let us call such an $X$ \emph{semi-regular}. The minimum
excess among such sets can be computed by enumerating the triples of segments
in $b(H_1),b(H_2),b(H_3)$ and finding the corresponding minimum cut capacity
for each triple; this takes $O(n^6)$ minimum cut computations in $G$. We,
however, prefer to deal with regular sets and apply a shortest dual paths
method, which can be regarded as an introduction to the method of estimating
the minimum excess of (2,3)-metrics described in the next section.
  \medskip

Consider a regular set $X\subset V$. The fact that the region $\Rscr(X)$ is
simply connected implies that the cut $\delta(X)$ of $G$ corresponds to a
simple cycle of $G^\ast$, denoted as $C(X)$, and to a set of paths in $\hat
G^\ast$. More precisely, we say that $X$ has \emph{type} $k=|\Hscr(X)|$, where
$\Hscr(X)$ denotes the set of holes $H_i$ such that $[X]\cap
b(H_i)\ne\emptyset, b(H_i)$. Since $k=0$ implies $d(\rho(X))=0$, only sets $X$
of types 1,2,3 are essential in~\refeq{cut_cond}. For $H_i\in\Hscr(X)$,  the
cut $\delta(X)$ meets $b(H_i)$ by a pair $\{e,g\}$ of edges, denoted as
$\Pi_i(X)$ (taking into account that $b(H_i)$ has no isthmus). Let $D_i(e,g)$
denote the set of demand pairs $st\in D$ located on $b(H_i)$ and separated by
$X$ (i.e., $s,t$ lie in different components of $b(H_i)-\{e,g\}$).

Suppose that $X$ is of type 1. Let $\Hscr(X)=\{H_i\}$ and $\Pi_i(X)=\{e,g\}$.
The cycle $C(X)$ in $G^\ast$ passes the elements $e^\ast,z_i,g^\ast$. It turns
into path $P(X)$ connecting the terminals $z_{i,e}$ and $z_{i,g}$ in $\hat
G^\ast$, and we have
  $$
   c(\delta(X))=c(C(X))=c(P(X))
   $$
(regarding cycles and paths as edge sets).

Let $\Xscr(e,g)$ be the collection of regular sets $X\subset V$ of type 1 such
that $\Hscr(X)=\{H_i\}$ and $\Pi_i(X)=\{e,g\}$, and suppose that we are going
to verify~\refeq{cut_cond} and, moreover, to find the minimum excess within
this collection. The right hand side value in~\refeq{cut_cond} is constant:
$d(\rho(X))=d(D_i(e,g))$; therefore, the task is reduced to finding a
$c$-shortest path $P$ from $z_{i,e}$ to $z_{i,g}$ in $\hat G^\ast$.

Thus, verification of the cut condition for the regular sets of type 1 and,
moreover, finding the minimum excess among them, is reduced to solving $O(n)$
shortest paths problems in $\hat G^\ast$ (each handling fixed $i\in\{1,2,3\}$
and $e\in E_i$ and all $g\in E_i$) and to computing $O(n^2)$ values $D_i(e,g)$.

For $\alpha=1,2,3$, let $\mu_{c,d}^\alpha$ denote the minimum excess
$\Delta_{c,d}(X)$ among the regular sets $X\subset V$ of type $\alpha$. We have
the following

\begin{prop} \label{pr:type1}
$\mu_{c,d}^1$ can be found in time $O(n^2 +n\cdot SP(n))$, where $SP(n')$ is
the complexity of a shortest paths algorithm in a planar graph with $n'$ nodes.
  \end{prop}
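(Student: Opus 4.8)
The plan is to follow the reduction already sketched in the running text and simply account for the running time. Fix a hole index $i\in\{1,2,3\}$ and an edge $e\in E_i$. By the discussion preceding the statement, every regular set $X$ of type $1$ with $\Hscr(X)=\{H_i\}$ and $\Pi_i(X)=\{e,g\}$ has $c(\delta(X))=c(P(X))$ for a path $P(X)$ joining the terminals $z_{i,e}$ and $z_{i,g}$ in $\hat G^\ast$; conversely, the cycle/path correspondence coming from simple connectedness of $\Rscr(X)$ shows that the minimum of $c(\delta(X))$ over $X\in\Xscr(e,g)$ equals the $c$-shortest path length from $z_{i,e}$ to $z_{i,g}$ in $\hat G^\ast$. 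Since the demand term $d(\rho(X))=d(D_i(e,g))$ is constant on $\Xscr(e,g)$, the minimum excess within $\Xscr(e,g)$ is the shortest-path length minus $d(D_i(e,g))$. Taking the minimum over all $i$ and all pairs $e,g\in E_i$ then yields $\mu_{c,d}^1$.

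Next I would bound the two contributions. For the shortest paths: for each of the $O(n)$ choices of a terminal $z_{i,e}$ we run a single-source shortest path computation in the planar graph $\hat G^\ast$ (which has $O(n)$ nodes, as it is obtained from $G^\ast$ by splitting three vertices into pendant vertices), obtaining the distances to all other terminals $z_{i,g}$ at once. This is $O(n)$ invocations of a planar shortest-paths routine, i.e. $O(n\cdot SP(n))$. For the demand values: there are $O(n^2)$ pairs $(e,g)$ with $e,g\in E_i$ over the three holes, and each quantity $D_i(e,g)$ — the set of demand pairs on $b(H_i)$ separated by removing $e,g$ from the cycle $b(H_i)$ — can be computed so that the whole table is built in $O(n^2)$ time; e.g. fix $e$, sweep $g$ around $b(H_i)$, and maintain $d(D_i(e,g))$ incrementally as demand endpoints cross the moving edge $g$. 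Combining, the total is $O(n^2 + n\cdot SP(n))$, and a final pass over the $O(n^2)$ candidate values extracts the minimum, which is absorbed in the same bound.

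The only genuinely substantive point — already granted to us by the preceding exposition and by Theorem~\ref{tm:3h_sharp} together with the structure in Theorem~\ref{tm:3h_sharp} and the cut/cycle correspondence — is the equivalence between regular type-$1$ cuts with prescribed boundary edges and terminal-to-terminal paths in $\hat G^\ast$; in particular that a $c$-shortest such path does arise from some regular set, so that the shortest-path value is attained and not merely a lower bound. I would therefore spend the bulk of the write-up verifying that claim carefully: that $\delta(X)\leftrightarrow C(X)$ is a bijection onto simple cycles of $G^\ast$ through $z_i$ of the relevant homotopy class, that cutting at $z_i$ turns these into $z_{i,e}$–$z_{i,g}$ paths of equal $c$-length, and that a shortest such path, read back in $G$, determines a set $X$ whose region is simply connected and whose intersection with each $b(H_j)$ is a segment — hence regular of type $1$. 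The timing analysis itself is routine bookkeeping; the main obstacle is making the geometric correspondence airtight.
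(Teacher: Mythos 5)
Your proposal follows exactly the paper's own route: the paper proves Proposition~\ref{pr:type1} by the discussion preceding it, namely the correspondence between regular type-1 cuts with $\Pi_i(X)=\{e,g\}$ and $z_{i,e}$--$z_{i,g}$ paths in $\hat G^\ast$, $O(n)$ single-source shortest-path computations (one per terminal), and $O(n^2)$ evaluations of the constants $d(D_i(e,g))$. Your added care about the converse direction (that a shortest dual path yields a regular set, so the bound is attained) and the incremental computation of the demand table are just explicit versions of what the paper leaves implicit, so the argument is correct and essentially identical.
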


To verify~\refeq{cut_cond} among the regular sets of type 2, we fix distinct
$i,j\in\{1,2,3\}$ and scan pairs $\{e,g\}\subset E_i$ and $\{e',g'\}\subset
E_j$. Let $\Xscr(e,g;e',g')$ be the collection of regular sets $X\subset V$ of
type 2 with $\Pi_i(X)=\{e,g\}$ and $\Pi_j(X)=\{e',g'\}$. For these sets $X$,
the right hand side value in \refeq{cut_cond} is again a constant, namely,
$d(D_i(e,g))+d(D_j(e',g'))=:\tilde d$. So we have to minimize $c(\delta (X))$
among $X\in\Xscr(e,g;e',g')$ and compare this minimum with $\tilde d$.

Now the cycle $C(X)$ in $G^\ast$ generates two disjoint paths $P,Q$ in $\hat
G^\ast$ going from $\{z_{i,e},z_{i,g}\}$ to $\{z_{j,e'},z_{j,g'}\}$; e.g., $P$
is a $z_{i,e}-z_{j,e'}$ path and $Q$ is a $z_{i,g}-z_{j,g'}$ path. Then
$c(\delta(X))=c(P)+c(Q)$.

This prompts an approach to computing the value
   $$
   \tilde c:=\min\{c(\delta(X))\colon X\in\Xscr(e,g;e',g')\}
   $$
or duly estimating it from below. In the graph $\hat G^\ast$ we find (simple)
$c$-shortest paths from each terminal in $\{z_{i,e},z_{i,g}\}$ to each terminal
in $\{z_{j,e'},z_{j,g'}\}$. Assume for definiteness that
  $$
  \bar c:=\dist (z_{i,e},z_{j,e'})+\dist (z_{i,g},z_{j,g'})\le
   \dist (z_{i,e},z_{j,g'})+\dist (z_{i,g},z_{j,e'})
  $$
(where we write `$\dist$' for the distance w.r.t. $c$) and let $P$ and $Q$ be
$c$-shortest paths from $z_{i,e}$ to $z_{j,e'}$ and from $z_{i,g}$ to
$z_{j,g'}$, respectively. Suppose that $P$ and $Q$ are disjoint. Then they
induce a simple cycle $C$ in $G^\ast$ with $c(C)=\bar c$, and the faces of
$G^\ast$ lying inside $C$ determine a regular set $X\in\Xscr(e,g;e',g')$ in $G$
with $c(\delta(X))=\bar c$. Then $\bar c=\tilde c$.

Next suppose that $P\cap Q\ne\emptyset$. Let $\Gamma$ be the subgraph of $\hat
G^\ast$ induced by the edges contained in exactly one of $P,Q$. The vertices
$z_{i,e},z_{i,g},z_{j,e'},z_{j,g'}$ are of degree 1 and all other vertices of
$\Gamma$ have even degrees. Hence we can find in $\Gamma$ two simple paths
$P',Q'$ such that either (a) each of $P',Q'$ connects $\{z_{i,e},z_{i,g}\}$ and
$\{z_{j,e'},z_{j,g'}\}$, and $P',Q'$ are \emph{disjoint}, or (b) $P'$ connects
$z_{i,e}$ and $z_{i,g}$, ~$Q'$ connects $z_{j,e'}$ and $z_{j,g'}$, and $P',Q'$
are \emph{edge-disjoint}. In particular, $c(P')+c(Q')\le \bar c\le \tilde c$.

Case~(a) is similar to the one considered above. In case~(b), $P',Q'$ induce
simple cycles $C,C'$ in $G^\ast$, where $C$ passes $e^\ast,z_i,g^\ast$, ~$C'$
passes $e'^\ast,z_j,g'^\ast$, and $c(C)+c(C')=c(P')+c(Q')\le\tilde c$. Now the
faces of $G^\ast$ lying inside $C$ (resp. $C'$) determine a regular set
$X\in\Xscr(e,g)$ (resp. $X'\in\Xscr(e',g')$) of type 1 in $G$. Then
  $$
c(\delta(X))=c(C),\;\; c(\delta(X'))=c(C'),\;\; d(\rho(X))=d(D_i(e,g)), \;\;
d(\rho(X'))=d(D_j(e',g')).
  $$
This implies that if~\refeq{cut_cond} is violated for some set in
$\Xscr(e,g;e',g')$, i.e., $\tilde c<\tilde d$, then so is for at least one of
$X,X'$ either. Moreover, we obtain the following

\begin{prop} \label{pr:type2}
By applying the above procedure to all $e,g\in E_i$ and $e',g'\in E_j$, $i\ne
j$, one can find, in time $O(n^4 +n\cdot SP(n))$, a bound $\nu_{c,d}^2\le
\mu_{c,d}^2$ for which at least one of the following is true:

{\rm (i)} $\nu_{c,d}^2= \mu_{c,d}^2$;

{\rm(ii)} there are two regular sets $X,Y$ of type 1 such that
$\Hscr(X)\ne\Hscr(Y)$ and $\Delta_{c,d}(X)+\Delta_{c,d}(Y)= \nu_{c,d}^2$.
  \end{prop}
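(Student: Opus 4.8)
The plan is to analyze the procedure described in the paragraphs immediately preceding Proposition~\ref{pr:type2} and show that it meets the stated running time and correctness guarantees. First I would address the running time. For each ordered pair $i\ne j$ and each choice of $\{e,g\}\subset E_i$ and $\{e',g'\}\subset E_j$ we need four distances $\dist(z_{i,e},z_{j,e'})$, etc., in $\hat G^\ast$; but rather than running a shortest-path computation for every quadruple, we run, for each fixed source terminal $z_{i,e}$, a single single-source shortest-path computation in $\hat G^\ast$ (a planar graph on $O(n)$ nodes), obtaining all distances from $z_{i,e}$ to every terminal in $Z_j$ at once. There are $O(n)$ terminals overall, hence $O(n\cdot SP(n))$ time to tabulate all pairwise terminal distances together with shortest-path trees. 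Given this table, each quadruple $(e,g;e',g')$ is processed in $O(1)$ amortized time to read off $\bar c$ and to decide which of cases (a), (b) applies — except that extracting the actual disjoint/edge-disjoint subpaths $P',Q'$ from $\Gamma$ (when $P\cap Q\ne\emptyset$) costs time proportional to the size of $P\cup Q$, i.e.\ $O(n)$ per quadruple in the worst case; but we only ever need to \emph{extract} such paths when we have found a violation or a new current minimum, and in the bookkeeping we can afford this. The dominant term is the $O(n^4)$ from scanning the $O(n^2)\cdot O(n^2)$ quadruples and the $O(n^2)$ values $D_i(e,g)$, plus $O(n\cdot SP(n))$ for the distances; this gives the claimed bound.

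Next I would define the bound itself. Set, for each quadruple, $\nu(e,g;e',g'):= \bar c - \tilde d$ where $\bar c$ is the smaller of the two ``parallel'' distance sums displayed in the excerpt and $\tilde d = d(D_i(e,g))+d(D_j(e',g'))$; then let $\nu_{c,d}^2$ be the minimum of these quantities over all quadruples and all $i\ne j$. The inequality $\nu_{c,d}^2\le\mu_{c,d}^2$ is the easy direction: any regular set $X$ of type~2 with $\Pi_i(X)=\{e,g\}$, $\Pi_j(X)=\{e',g'\}$ induces two disjoint terminal-to-terminal paths in $\hat G^\ast$ of total $c$-length $c(\delta(X))$, and since shortest paths are no longer, $\bar c\le c(\delta(X))$, whence $\nu(e,g;e',g')\le\Delta_{c,d}(X)$; minimizing over everything gives $\nu_{c,d}^2\le\mu_{c,d}^2$.

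The substantive part is the dichotomy (i)/(ii), and I would argue it by examining, for each quadruple, the two cases distinguished in the excerpt. If the $c$-shortest paths $P,Q$ realizing $\bar c$ are disjoint, they bound a region whose interior faces give a genuine regular set $X\in\Xscr(e,g;e',g')$ with $c(\delta(X))=\bar c$, so $\nu(e,g;e',g')=\Delta_{c,d}(X)\ge\mu_{c,d}^2$; combined with $\nu_{c,d}^2\le\mu_{c,d}^2$, if the overall minimum is attained at such a ``clean'' quadruple we are in case~(i). (One must check $X$ really is regular: simple connectivity of $\Rscr(X)$ follows from $C$ being a simple cycle of $G^\ast$, and the segment conditions follow because $C$ meets $b(H_i)$ exactly in $\{e,g\}$ and $b(H_j)$ exactly in $\{e',g'\}$ and no other hole at all.) If instead $P\cap Q\ne\emptyset$, the parity argument on $\Gamma$ (the four former endpoints have odd degree, all else even) yields either subcase~(a), which is handled like the disjoint case and again produces a regular type-2 set realizing a value $\le\bar c-\tilde d$, or subcase~(b), which produces two regular type-1 sets $X\in\Xscr(e,g)$, $X'\in\Xscr(e',g')$ with $c(\delta(X))+c(\delta(X'))=c(P')+c(Q')\le\bar c$ and $d(\rho(X))+d(\rho(X'))=\tilde d$, hence $\Delta_{c,d}(X)+\Delta_{c,d}(X')\le\bar c-\tilde d=\nu(e,g;e',g')$ and $\Hscr(X)=\{H_i\}\ne\{H_j\}=\Hscr(X')$. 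So in every quadruple, the value $\nu(e,g;e',g')$ is $\ge\mu_{c,d}^2$ \emph{or} is $\ge\Delta_{c,d}(X)+\Delta_{c,d}(X')$ for an admissible pair of type-1 sets with distinct hole-sets. Taking the minimizing quadruple over all choices: if it is of clean or subcase-(a) type, we get $\nu_{c,d}^2\ge\mu_{c,d}^2$ and hence (i); if it is of subcase-(b) type, the pair $X,Y:=X'$ furnished there satisfies $\Delta_{c,d}(X)+\Delta_{c,d}(Y)\le\nu_{c,d}^2$, and since also $\nu_{c,d}^2\le\mu_{c,d}^2\le\min\{\ldots\}$ cannot force strict inequality here — indeed $\nu_{c,d}^2$ was \emph{defined} as the min of the $\nu$'s and we can refine the subcase-(b) construction so that $c(P')+c(Q')$ is exactly the relevant minimum — we obtain $\Delta_{c,d}(X)+\Delta_{c,d}(Y)=\nu_{c,d}^2$, i.e.\ (ii).

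The main obstacle I anticipate is making the last sentence airtight: one has to ensure the procedure actually \emph{records} the right pair of type-1 sets, i.e.\ that when the global minimum $\nu_{c,d}^2$ comes from a subcase-(b) quadruple, the extracted $P',Q'$ can be chosen so that $c(P')+c(Q')=\nu_{c,d}^2 + \tilde d$ exactly and not merely $\le$. This is a matter of picking $P',Q'$ in $\Gamma$ minimizing total length among the two structural options, which is possible because $\Gamma$ has only the four odd-degree vertices, so an Eulerian-type decomposition into the two required simple subpaths exists and the shorter option has length $\le\bar c$; a short argument shows this shorter option's length equals the minimum of $c(\delta(X))$ over $\Xscr(e,g;e',g')$ when that minimum is itself attained by a ``split'' configuration, and otherwise we are in case~(i) via a different quadruple. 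The remaining verifications — that the faces inside a simple cycle of $G^\ast$ that meets each $b(H_k)$ in a controlled way form a \emph{regular} subset of $V$ in the precise sense of Section~\ref{sec:prelim}, and that $c(\delta(X))$ equals the $c$-length of the corresponding dual cycle — are routine consequences of planar duality and the construction of $\hat G^\ast$, and I would dispatch them briefly.
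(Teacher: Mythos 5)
Your reconstruction of the procedure (the terminal-to-terminal distance table in $\hat G^\ast$, the cheaper matching value $\bar c$, the parity decomposition of $\Gamma$ into subcases (a)/(b), and the easy inequality $\nu^2_{c,d}\le\mu^2_{c,d}$) is the paper's route, and the running-time accounting is fine. The genuine gap is in your definition of the bound and hence in alternative (ii). You set $\nu(e,g;e',g'):=\bar c-\tilde d$ and $\nu^2_{c,d}:=\min$ of these over all quadruples. But in subcase (b) the extracted paths $P',Q'$ live in $\Gamma$, which discards every edge common to $P$ and $Q$, so $c(P')+c(Q')\le \bar c-2c(P\cap Q)$; this is strictly below $\bar c$ whenever the shared edges have positive capacity. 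Consequently the two type-1 sets only satisfy $\Delta_{c,d}(X)+\Delta_{c,d}(X')\le \nu(e,g;e',g')$, and if the global minimum of your $\nu$-values is attained at such a quadruple, neither (i) nor the exact equality required in (ii) has been established (indeed your $\nu^2_{c,d}$ can then lie strictly between $\Delta_{c,d}(X)+\Delta_{c,d}(X')$ and $\mu^2_{c,d}$ with nothing realizing it). Your proposed repair --- choosing $P',Q'$ so that $c(P')+c(Q')=\nu^2_{c,d}+\tilde d=\bar c$ exactly --- is impossible for the same reason, and the auxiliary claim that the shorter decomposition's length equals $\min\{c(\delta(X))\colon X\in\Xscr(e,g;e',g')\}$ is unsupported: all one has is the chain $c(P')+c(Q')\le\bar c\le\tilde c$.

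The correct (and intended) reading is to let $\nu^2_{c,d}$ be the excess \emph{realized by the configuration actually extracted}: take a quadruple minimizing $\bar c-\tilde d$, run the extraction only there, and set $\nu^2_{c,d}:=\Delta_{c,d}(X)$ if the outcome is a regular type-2 set $X$ (disjoint case or subcase (a)), and $\nu^2_{c,d}:=\Delta_{c,d}(X)+\Delta_{c,d}(X')$ in subcase (b). Then at that quadruple $\nu^2_{c,d}\le\bar c-\tilde d$, which by minimality is at most $\bar c-\tilde d\le\tilde c-\tilde d$ evaluated at a quadruple attaining $\mu^2_{c,d}$, so $\nu^2_{c,d}\le\mu^2_{c,d}$ always; in the first outcome $\Delta_{c,d}(X)\ge\mu^2_{c,d}$ forces equality, giving (i), and in the second outcome (ii) holds by construction with $\Hscr(X)=\{H_i\}\ne\{H_j\}=\Hscr(X')$. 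This also streamlines your complexity bookkeeping: the $O(n)$-time path extraction is needed only at the minimizing quadruple, so the total stays $O(n^4+n\cdot SP(n))$, whereas defining $\nu^2_{c,d}$ through realized values at every quadruple would add a factor of $n$. With this change of definition the rest of your argument goes through and coincides with the paper's.
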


Finally, to verify~\refeq{cut_cond} among the regular sets of type 3 we scan
all triples of pairs $\{e^i,g^i\}\subset E_i$, $i=1,2,3$. Let
$\Xscr=\Xscr(e^1,g^1;e^2,g^2;e^3,g^3)$ be the collection of corresponding
regular sets related to such a six-tuple. As before, we have a constant in the
right hand side of~\refeq{cut_cond}, namely, $\tilde d:=\sum(d(D_i(e^i,g^i))
\colon i=1,2,3)$, and the goal is to find or duly estimate from below the
minimum cut capacity
  $$
  \tilde c:=\min\{c(\delta(X))\colon X\in \Xscr\}.
  $$

Acting as in the previous case, we reduce the task to finding in $\hat G^\ast$
$c$-shortest paths from each of $\{z_{i,e^i},z_{i,g^i}\}$ to each of
$\{z_{j,e^j},z_{j,g^j}\}$ for all $i<j$. Among these, we take three paths
$P_1,P_2,P_3$ with the minimum total $c$-length such that all endvertices of
these paths are different, and each path connects the boundaries of different
holes; let for definiteness $P_i$ connects $z_{i,e^i}$ and $z_{i+1,g^{i+1}}$
(taking indices modulo 3).

Comparing cuts $\delta(X)$, $X\in\Xscr$, with their counterparts (path systems)
in $\hat G^\ast$, we have
  $$
  \bar c:=c(P_1)+c(P_2)+c(P_3)\le \tilde c.
  $$
Moreover, using $P_1,P_2,P_3$, one can construct a ``checker''
for~\refeq{cut_cond} which is at least as strong as the whole collection
$\Xscr$.

This is immediate when $P_1,P_2,P_3$ are pairwise disjoint. And if not, we
proceed similarly to the previous case. More precisely, let $\Gamma$ be the
subgraph of $\hat G^\ast$ induced by the edges that belong to an odd number of
paths among $P_1,P_2,P_3$. Since all nonterminal vertices in $\Gamma$ have even
degrees, we can find in $\Gamma$ three simple paths $P'_1,P'_2,P'_3$ such that:
either

(a) each $P'_i$ connects $\{z_{i,e^i},z_{i,g^i}\}$ and
$\{z_{i+1,e^{i+1}},z_{i+1,g^{i+1}}\}$, and $P'_1,P'_2,P'_3$ are pairwise
disjoint, or

(b) each $P'_i$ connects $z_{i,e^i}$ and $z_{i,g^i}$, and $P'_1,P'_2,P'_3$ are
pairwise edge-disjoint, or

(c) for some $\{i,j,k\}=\{1,2,3\}$, ~$P'_i$ connects $z_{i,e^i}$ and
$z_{i,g^i}$, each of $P'_j,P'_k$ connects $\{z_{j,e^j},z_{j,g^j}\}$ and
$\{z_{k,e^k},z_{k,g^k}\}$, the paths $P'_j,P'_k$ are disjoint and they are
edge-disjoint from $P'_i$.

In particular, $c(P'_1)+c(P'_2)+c(P'_3)\le \bar c\le \tilde c$. In case~(a), we
obtain a required set $X\in\Xscr$. In case~(b), $P'_1,P'_2,P'_3$ induce simple
cycles $C_1,C_2,C_3$ in $G^\ast$, where $C_i$ passes
$(e^i)^\ast,z_i,(g^i)^\ast$, which in turn determine regular sets
$X_i\in\Xscr(e^i,g^i)$ of type 1 satisfying $\sum(c(\delta(X_i))\colon
i=1,2,3)\le\tilde c$ and $\sum(d(\rho(X_i))\colon i=1,2,3)=\tilde d$. And
case~(c) gives regular sets $X\in \Xscr(e^i,g^i)$ and $Y\in \Xscr(e^j,g^j;
e^k,g^k)$ such that $c(\delta(X))+c(\delta(Y)\le\tilde c$ and
$d(\rho(X))+d(\rho(Y))=\tilde d$.

This leads to the following

\begin{prop} \label{pr:type3}
By applying the above procedure to all sets of six edges $e^i,g^i\in E_i$,
$i=1,2,3$, one can find, in time $O(n^6 +n\cdot SP(n))$, a bound
$\nu_{c,d}^3\le \mu_{c,d}^3$ for which at least one of the following is true:

{\rm(i)} $\nu_{c,d}^3= \mu_{c,d}^3$;

{\rm(ii)} there are three regular sets $X_1,X_2,X_3$ of type 1 such that
$\Hscr(X_i)=\{H_i\}$ and
$\Delta_{c,d}(X_1)+\Delta_{c,d}(X_2)+\Delta_{c,d}(X_3)= \nu_{c,d}^3$;

{\rm (iii)} there are two regular sets $X,Y$ such that $\Hscr(X)=\{H_i\}$,
~$\Hscr(Y)=\{H_j,H_k\}$, where $\{i,j,k\}=\{1,2,3\}$, and
$\Delta_{c,d}(X)+\Delta_{c,d}(Y)= \nu_{c,d}^3$.
  \end{prop}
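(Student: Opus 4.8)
The plan is to mimic, almost verbatim, the argument already carried out for type~2 in Proposition~\ref{pr:type2}, but now tracking three dual paths instead of two, which forces the three-way case split (a)--(c) recorded in the statement. I would organize the proof around three claims: (1) the complexity bound, (2) the inequality $\nu_{c,d}^3\le\mu_{c,d}^3$, and (3) the dichotomy (i)--(iii).

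\medskip

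\noindent\textbf{Complexity.} First I would count the work. There are $O(n^2)$ choices of pair $\{e^i,g^i\}$ in each $E_i$, hence $O(n^6)$ six-tuples; for each we need a bounded number of shortest-path distances between the four (or six) relevant terminals. Crucially, these distances need not be recomputed per six-tuple: for each ordered pair $(i,j)$ and each source terminal $z_{i,e}$ we run one shortest-paths computation in $\hat G^\ast$ to all terminals of $\hat H_j$, which is $O(n)$ runs, i.e.\ $O(n\cdot SP(n))$ total; likewise the values $d(D_i(e,g))$ are precomputed in $O(n^2)$ time as in Propositions~\ref{pr:type1} and~\ref{pr:type2}. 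After this preprocessing, each six-tuple is handled in $O(1)$ (choosing the cheapest admissible triple of paths, forming $\Gamma$, extracting $P'_1,P'_2,P'_3$), and comparing $\bar c$ against $\tilde d$; over $O(n^6)$ six-tuples this is $O(n^6)$. Summing gives $O(n^6+n\cdot SP(n))$.

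\medskip

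\noindent\textbf{The lower bound $\bar c\le\tilde c$ and the reduction.} Fix a six-tuple and an optimal $X\in\Xscr$. Since $\Rscr(X)$ is simply connected and $X$ has type~$3$, the cut $\delta(X)$ corresponds in $\hat G^\ast$ to three pairwise disjoint simple paths, one joining a terminal in $\{z_{i,e^i},z_{i,g^i}\}$ to one in $\{z_{i+1,e^{i+1}},z_{i+1,g^{i+1}}\}$ for each $i$ (indices mod~3), with total $c$-length $c(\delta(X))=\tilde c$; this is exactly the combinatorial picture from the type-2 case extended by one more hole, and I would justify it by the same planarity/simple-connectedness argument, referring to the structure in~\refeq{reg23} and the picture. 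Consequently the minimum-total-length admissible triple $P_1,P_2,P_3$ satisfies $\bar c\le\tilde c$. Now form $\Gamma$, the subgraph of $\hat G^\ast$ on the edges lying in an odd number of $P_1,P_2,P_3$; the four (or six) terminals appearing have odd degree in $\Gamma$ and every other vertex has even degree, so $\Gamma$ decomposes into simple paths matching the odd-degree vertices plus cycles, and $c(\Gamma)\le c(P_1)+c(P_2)+c(P_3)=\bar c$. The only ways to pair the odd-degree terminals by paths that are "admissible" for our demand bookkeeping are precisely (a), (b), (c): either three disjoint inter-hole paths; or three edge-disjoint intra-hole paths; or one intra-hole path at hole $i$ together with two disjoint inter-hole paths between holes $j$ and $k$, all three edge-disjoint. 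In each case, routing the cheaper halves gives $c(P'_1)+c(P'_2)+c(P'_3)\le c(\Gamma)\le\bar c\le\tilde c$.

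\medskip

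\noindent\textbf{Translating dual paths back into cuts.} In case (a) the three disjoint paths bound a region of faces of $G^\ast$ whose $G$-side is a regular set $X\in\Xscr$ with $c(\delta(X))=c(P'_1)+c(P'_2)+c(P'_3)\le\tilde c$ and $d(\rho(X))=\tilde d$; since also $\tilde c\le c(\delta(X))$ for any $X\in\Xscr$, we conclude $\bar c=\tilde c$ and this six-tuple contributes exactly $\mu$-relevant data. In case (b) each $P'_i$, being a simple path between $z_{i,e^i}$ and $z_{i,g^i}$, closes up through $z_i$ into a simple cycle $C_i$ in $G^\ast$ passing $(e^i)^\ast,z_i,(g^i)^\ast$; the faces inside $C_i$ give a regular type-1 set $X_i\in\Xscr(e^i,g^i)$ with $\Hscr(X_i)=\{H_i\}$, and summing the identities for $c(\delta(X_i))$ and $d(\rho(X_i))$ yields $\sum_i\Delta_{c,d}(X_i)=\sum_i c(\delta(X_i))-\sum_i d(D_i(e^i,g^i))\le\tilde c-\tilde d$. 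Case (c) is the hybrid: $P'_i$ closes to a type-1 cut as in (b), while the two disjoint $P'_j,P'_k$, edge-disjoint from $P'_i$, bound a region giving a regular type-2 set $Y\in\Xscr(e^j,g^j;e^k,g^k)$ with $\Hscr(Y)=\{H_j,H_k\}$, and $c(\delta(X))+c(\delta(Y))\le\tilde c$, $d(\rho(X))+d(\rho(Y))=\tilde d$. In all cases the produced family of regular sets has combined excess $\le\tilde c-\tilde d=\min_{X\in\Xscr}\Delta_{c,d}(X)$. Taking the overall minimum over six-tuples of the quantity $\bar c-\tilde d$ (when (a) holds throughout), and otherwise of the combined excesses arising in (b)/(c), defines $\nu_{c,d}^3$: it is $\le\mu_{c,d}^3$ because each $\Xscr$ is accounted for by something no larger, and the dichotomy is exactly (i) (the minimizing six-tuple fell into case (a), so its bound is the true minimum excess of that $\Xscr$, which can be taken to be the global minimizer), (ii) (case (b) at the minimizer), or (iii) (case (c) at the minimizer).

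\medskip

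\noindent The main obstacle I anticipate is the careful case analysis turning the parity decomposition of $\Gamma$ into precisely the list (a)--(c) and, within each case, verifying that the faces enclosed by the resulting dual cycles/regions really do form \emph{regular} sets of the advertised types (simple connectedness of $\Rscr(\cdot)$, segment condition on each $b(H_i)$, correct $\Hscr(\cdot)$) with the claimed cut-capacity and separated-demand identities. This is the same kind of planar-duality bookkeeping already used for type~2, but with more moving parts; once it is in place, the optimality juggling $\bar c\le\tilde c$ and the definition of $\nu_{c,d}^3$ are routine.
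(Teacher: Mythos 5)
Your proposal follows essentially the same route as the paper: for each six-tuple, compare the cut $\delta(X)$ with a triple of dual paths to get $\bar c\le\tilde c$, take the parity subgraph $\Gamma$ of the three chosen shortest paths, split into the cases (a)--(c), and translate the resulting path systems back into regular sets of types 1, 2, 3 --- which is exactly the paper's (informal) argument preceding the proposition, at the same level of detail. The only nit is your complexity accounting: forming $\Gamma$ and extracting $P'_1,P'_2,P'_3$ is not $O(1)$ per six-tuple, so one should restrict the per-six-tuple work to $O(1)$ comparisons of precomputed distances and carry out the decomposition only for the minimizing six-tuple (or once at the end), which restores the stated $O(n^6+n\cdot SP(n))$ bound.
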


In particular, Propositions~\ref{pr:type1},~\ref{pr:type2},~\ref{pr:type3} give
the following
  \begin{corollary} \label{cor:ver_cut}
To verify validity or violation of cut condition~\refeq{cut_cond} for
$\Dscr(c,d)$ reduces to $O(n)$ shortest paths computations in a dual planar
graph plus $O(n^6)$ elementary operations.
  \end{corollary}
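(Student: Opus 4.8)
The plan is to assemble Corollary~\ref{cor:ver_cut} directly from Propositions~\ref{pr:type1}, \ref{pr:type2}, and~\ref{pr:type3}, which together cover all essential types of regular sets. Recall that by Theorem~\ref{tm:3h_sharp} the solvability of $\Dscr(c,d)$ is governed by the cut condition on regular sets together with the (2,3)-metric condition; for the present corollary we focus only on the cut part. Since a regular set $X$ has type $k=|\Hscr(X)|\in\{0,1,2,3\}$ and type $0$ forces $d(\rho(X))=0$ (so \refeq{cut_cond} holds trivially), it suffices to examine the three collections handled by the propositions. Thus the cut condition \refeq{cut_cond} holds for all regular sets if and only if $\mu_{c,d}^\alpha\ge 0$ for $\alpha=1,2,3$.

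First I would handle type~1: by Proposition~\ref{pr:type1} the exact value $\mu_{c,d}^1$ is computed in time $O(n^2+n\cdot SP(n))$, i.e. $O(n)$ shortest-path computations plus $O(n^2)$ elementary operations for the demand sums $D_i(e,g)$, and checking its sign settles validity of \refeq{cut_cond} among type-1 sets. Next, for types~2 and~3, Propositions~\ref{pr:type2} and~\ref{pr:type3} do not compute $\mu_{c,d}^2$ and $\mu_{c,d}^3$ exactly but produce lower bounds $\nu_{c,d}^2\le\mu_{c,d}^2$ and $\nu_{c,d}^3\le\mu_{c,d}^3$ with a dichotomy: either the bound is tight, or there exist type-1 (or one type-1 and one type-2) regular sets whose excesses add up to the bound. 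The key observation is that this dichotomy is exactly what is needed to detect \emph{violation} of the cut condition correctly. Indeed, if some type-2 or type-3 regular set violates \refeq{cut_cond}, then its excess is negative, hence $\nu_{c,d}^2<0$ or $\nu_{c,d}^3<0$; conversely if, say, $\nu_{c,d}^3<0$, then in case~(i) we directly have $\mu_{c,d}^3<0$, while in cases~(ii)/(iii) the sum of two or three excesses of strictly lower-type regular sets is negative, so at least one of those summands is negative and \refeq{cut_cond} is already violated on a set detected (or bounded) at a lower level. So: \refeq{cut_cond} holds for all regular sets $\iff$ $\mu_{c,d}^1\ge 0$ and $\nu_{c,d}^2\ge 0$ and $\nu_{c,d}^3\ge 0$.

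Finally I would tally the running time. Proposition~\ref{pr:type1} costs $O(n^2+n\cdot SP(n))$; Proposition~\ref{pr:type2} costs $O(n^4+n\cdot SP(n))$; Proposition~\ref{pr:type3} costs $O(n^6+n\cdot SP(n))$. Summing, the total is $O(n^6+n\cdot SP(n))$, which we record as $O(n)$ shortest-path computations in the dual planar graph $\hat G^\ast$ plus $O(n^6)$ elementary operations, as claimed. The main (and essentially only) subtlety is the logical point in the previous paragraph: one must argue that replacing the exact minima $\mu_{c,d}^2,\mu_{c,d}^3$ by the bounds $\nu_{c,d}^2,\nu_{c,d}^3$ loses nothing for the \emph{decision} problem — that a negative bound always witnesses a genuine violation somewhere — and this follows immediately from the additivity in alternatives (ii)/(iii) of Propositions~\ref{pr:type2} and~\ref{pr:type3} together with the fact that a negative sum forces a negative summand. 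Everything else is bookkeeping of the stated complexities.
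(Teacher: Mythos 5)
Your proposal is correct and matches the paper's approach: the paper derives this corollary directly from Propositions~\ref{pr:type1}, \ref{pr:type2}, and~\ref{pr:type3} (summing their complexity bounds), exactly as you do. Your extra observation — that the lower bounds $\nu^2_{c,d},\nu^3_{c,d}$ suffice for the decision problem because in alternatives (ii)/(iii) a negative bound is a sum of excesses of regular sets and hence forces a genuine violation, while validity of \refeq{cut_cond} on all regular sets forces the bounds to be nonnegative — is precisely the point the paper leaves implicit, and you have justified it correctly.
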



\section{Verifying (2,3)-metric conditions} \label{sec:veri_23}

In the procedure of verifying the (2,3)-metric condition for
$\Dscr(G,\Hscr,D,c,d)$, described in this section, we also use a technique of
shortest paths in the dual graph $\hat G^\ast$.

Consider a regular (2,3)-metric $m=m^\sigma$ and its corresponding partition
$(T_1,T_2,S_1,S_2,S_3)$ (see~\refeq{reg23}). For $i\in\{1,2,3\}$, consider the
cycle $b(H_i)$. By the regularity of $m$, this cycle shares two edges with the
cut $\delta(S_{i-1})$, denoted as $g(i-1),h(i-1)$, and two edges with
$\delta(S_{i+1})$, denoted as $g'(i+1),h'(i+1)$; let
$g(i-1),h(i-1),h'(i+1),g'(i+1)$ occur in this order clockwise in $b(H_i)$
(taking indices modulo 3). Note that, although the segments $[S_{i-1}]\cap
b(H_i)$ and $[S_{i+1}]\cap b(H_i)$ are disjoint, the edges $g(i-1)$ and
$g'(i+1)$ may coincide, and similarly for $h(i-1)$ and $h'(i+1)$.

So, for $p=1,2,3$, the cut~$\delta(S_p)$ meets $b(H_{p+1})$ by $\{g(p),h(p)\}$,
meets $b(H_{p-1})$ by $\{g'(p),h'(p)\}$, and does not meet $b(H_p)$. Since the
region $\Rscr(S_p)$ is simply connected, the cut $\delta(S_p)$ corresponds to a
simple cycle $C(S_p)$ in $G^\ast$; it passes the elements
$g(p)^\ast,z_{p+1},h(p)^\ast,h'(p)^\ast,z_{p-1},g'(p)^\ast$ (in the
counterclockwise order). The cycle $C(S_p)$ turns into two disjoint paths in
$\hat G^\ast$: path $P_p$ connecting the terminals $z_{p+1,g(p)}$ and
$z_{p-1,g'(p)}$, and path $Q_p$ connecting $z_{p+1,h(p)}$ and $z_{p-1,h'(p)}$.
See the picture.

\vspace{-0.3cm}
\begin{center}
\includegraphics{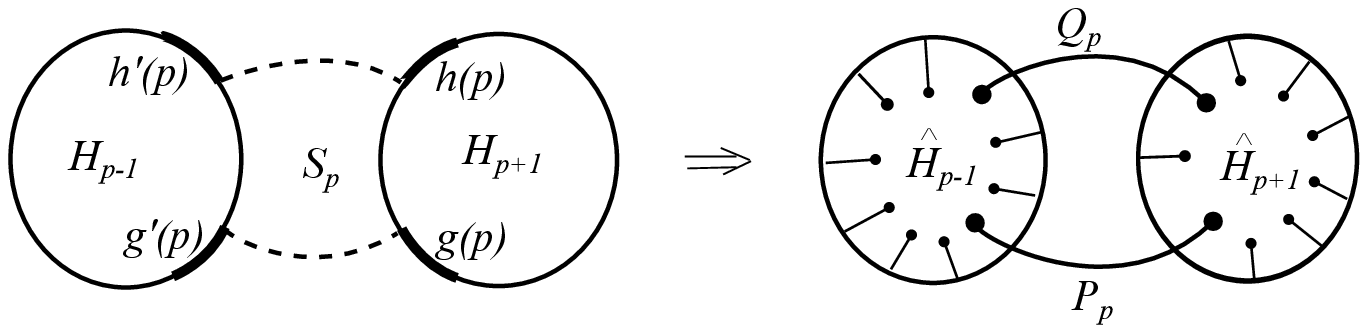}
\end{center}
\vspace{-0.3cm}

This correspondence gives $c(\delta(S_p))=c(P_p)+c(Q_p)$, implying
  $$
  c(m)=\sum\Bigl(c(\delta(S_p))\colon p=1,2,3\Bigr)
         =\sum\Bigl(c(P_p)+c(Q_p)\colon p=1,2,3\Bigr),
  $$
taking into account that no edge of $G$ connects $T_1$ and $T_2$.

In order to express $d(m)$, consider arbitrary edges $b_1,b_2,b_3,b_4$
occurring in this order in a cycle $b(H_i)$, possibly with $b_q=b_{q+1}$ for
some $q$ (letting $b_5:=b_1$). Removal of these edges from the cycle produces
four segments $\omega_1,\omega_2,\omega_3,\omega_4$, where $\omega_q$ is the
(possibly empty) segment between $b_q$ and $b_{q+1}$. Let
$d_i(b_1,b_2,b_3,b_4)$ be the sum of demands $d(st)$ over the pairs $st$
connecting neighboring segments $\omega_q,\omega_{q+1}$ plus twice the sum of
demands $d(st)$ over $st$ connecting either $\omega_1$ and $\omega_3$, or
$\omega_2$ and $\omega_4$.

Now for $i=1,2,3$, take as $b_1,b_2,b_3,b_4$ the edges $g(i-1),h(i-1),
h'(i+1),g'(i+1)$, respectively. One can see that the contribution to $d(m)$
from the demand pairs on $b(H_i)$ is just $d_i(g(i-1),h(i-1),
h'(i+1),g'(i+1))$. Hence
  $$ 
  d(m)=\sum\Bigl(d_i(g(i-1),h(i-1), h'(i+1),g'(i+1))\colon i=1,2,3\Bigr).
  $$

This prompts the idea to minimize $c(m)$ over a class of (2,3)-metrics $m$
which for each $i=1,2,3$, use the same quadruple of edges in $b(H_i)$, and
therefore have equal values $d(m)$. (In reality, we will be forced to include
in this class certain non-regular (2,3)-metrics as well.)

On this way we come to the following task, which is solved by comparing $O(1)$
combinations of the lengths of $c$-shortest paths in $\hat G^\ast$:
 \begin{numitem1} \label{eq:task}
Given, for $i=1,2,3$, a quadruple $\tilde Z_i=(z_i^1,z_i^2,z_i^3, z_i^4=z_i^0)$
of terminals in $Z_i$ (with possible equalities), find a set $\Pscr$ of six
(simple) paths in $\hat G^\ast$ minimizing the total $c$-length, provided that:
  \begin{itemize}
\item[($\ast$)] each path in $\Pscr$ connects terminals $z_i^p$ and $z_j^q$ with $i\ne j$,
and the set of endvertices of the paths in $\Pscr$ is exactly $\tilde Z_1\cup
\tilde Z_2\cup \tilde Z_3$ (respecting the possible multiplicities).
 \end{itemize}
  \end{numitem1}

Next we need some terminology and notation. Let $A_i$ denote the quadruple of
edges in the cycle $b(H_i)$ of $G$ that corresponds to $\tilde Z_i$ (respecting
the possible multiplicities), $i=1,2,3$. Let $\Ascr:=(A_1,A_2,A_3)$. Define
$\zeta (\Ascr)$ to be the minimum $c$-length of a path system in~\refeq{task}
and define $d(\Ascr)$ to be the corresponding combinations of demands. Then
$d(\Ascr)=d(m)$ for any $m\in\Mscr(\Ascr)$ and
  \begin{equation} \label{eq:zetaA}
  \zeta(\Ascr)\le \min\{c(m)\colon m\in \Mscr(\Ascr)\},
  \end{equation}
where $\Mscr(\Ascr)$ denote the set of regular (2,3)-metrics $m=m^\sigma$ in
$G$ \emph{agreeable to} $\Ascr$, i.e., such that for the partition
$\Xi^\sigma=(T_1,T_2,S_1,S_2,S_3)$ and $i=1,2,3$, the cuts
$\delta(S_{i-1}),\delta(S_{i+1})$ meet $b(H_i)$ by $A_i$.

In general, inequality~\refeq{zetaA} may be strong. Nevertheless, we can get a
converse inequality by extending $\Mscr(\Ascr)$ to a larger class of
(2,3)-metrics.
\medskip

\noindent\textbf{Definition.} Let us say that a (2,3)-metric $m=m^\sigma$ is
\emph{semi-regular} if the sets $S_1,S_2,S_3$ in $\Xi^\sigma$ are nonempty and
satisfy~(iii) in~\refeq{reg23}.\medskip

\noindent(Whereas $T_1,T_2$ may be empty and (ii) of~\refeq{reg23} need not
hold; in particular, subgraphs $[S_i]$ need not be connected.) We show the
following
  \begin{prop} \label{pr:semiregular}
$\zeta(\Ascr)$ is equal to $c(m)$ for some semi-regular (2,3)-metric $m$
agreeable to $\Ascr$.
  \end{prop}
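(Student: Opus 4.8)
The plan is to start from an optimal solution $\Pscr=\{P_1,Q_1,P_2,Q_2,P_3,Q_3\}$ to task~\refeq{task} for the given triple $\Ascr=(A_1,A_2,A_3)$, with $P_p,Q_p$ labelled so that (in the generic, disjoint case) $P_p$ joins $z_{p+1,g(p)}$ to $z_{p-1,g'(p)}$ and $Q_p$ joins $z_{p+1,h(p)}$ to $z_{p-1,h'(p)}$, matching the pattern that a regular (2,3)-metric produces. From this path system in $\hat G^\ast$ I want to read off a partition $(T_1,T_2,S_1,S_2,S_3)$ of $V$ realizing a semi-regular (2,3)-metric $m$ agreeable to $\Ascr$ with $c(m)=\zeta(\Ascr)$. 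Together with~\refeq{zetaA} (applied after we check $m$ is in fact covered by the extended class), this gives the claimed equality. So the real content is: (1) turn the six optimal paths into a well-defined metric, and (2) show that metric is semi-regular and agreeable.

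**Extracting the partition.** First I would handle disjointness. If the six paths are pairwise disjoint, each pair $P_p,Q_p$ together with the arc of $b(\hat H_{p+1})$ between $z_{p+1,g(p)}$ and $z_{p+1,h(p)}$ and the arc of $b(\hat H_{p-1})$ between $z_{p-1,g'(p)}$ and $z_{p-1,h'(p)}$ bounds a closed region; reverting the splitting of the $z_i$'s, $P_p\cup Q_p$ closes up to a simple cycle $C(S_p)$ in $G^\ast$, and I define $S_p$ to be the set of vertices of $G$ enclosed by $C(S_p)$ on the side not containing $H_p$. The two remaining complementary regions of the plane (after removing the holes and the $\Rscr(S_p)$) define $T_1,T_2$. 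If the paths are not pairwise disjoint, I would argue as in the type-3 cut analysis of Section~\SEC{veri_cut}: pass to the subgraph $\Gamma$ of $\hat G^\ast$ on edges used an odd number of times, observe all nonterminal vertices have even degree, and re-route to get a new system of no greater $c$-length with the correct endpoints in which the three "$S_p$-cycles" are realizable as vertex-disjoint or edge-disjoint simple closed curves. Here an optimal $\Pscr$ cannot have strictly smaller length after rerouting, so $c(m)=\zeta(\Ascr)$ is preserved; degeneracies such as $g(p)=g'(p+?)$ shared between consecutive holes, or an empty $S_p$-region, are exactly what force us to allow non-regular (but semi-regular) metrics — e.g. $T_1$ or $T_2$ empty, or $[S_p]$ disconnected.

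**Verifying semi-regularity and agreeability.** Next I would check that $\sigma$ (send $v\mapsto s_p$ if $v\in S_p$, $v\mapsto t_1$ or $t_2$ according to which leftover region contains $v$) yields a genuine map $V\to V(K_{2,3})$ and that $m=m^\sigma$ has the right cut decomposition: by construction $\delta(S_p)$ corresponds to $C(S_p)$, hence $c(\delta(S_p))=c(P_p)+c(Q_p)$, and since no edge of $G$ joins $T_1$ and $T_2$ (the two leftover regions are separated by the holes and the $\Rscr(S_p)$'s along the curves $C(S_p)$), summing gives $c(m)=\sum_p\bigl(c(P_p)+c(Q_p)\bigr)=\zeta(\Ascr)$. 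For semi-regularity I must show the $S_p$ are nonempty and satisfy~(iii) of~\refeq{reg23}: nonemptiness is forced because each $C(S_p)$ genuinely separates part of $b(H_{p+1})$ from part of $b(H_{p-1})$ while avoiding $b(H_p)$ (an empty $S_p$ would let us shorten the paths, contradicting optimality); and the segment conditions $[S_p]\cap b(H_j)$ being a segment, empty iff $j=p$, follow from the cyclic order $z_i^1,z_i^2,z_i^3,z_i^4$ in which the endpoints meet $b(\hat H_i)$, i.e. from the defining property~($\ast$) of task~\refeq{task} together with the fact that $A_i$ is the quadruple dual to $\tilde Z_i$. Finally, agreeability is immediate: by the labelling, $\delta(S_{i-1})$ and $\delta(S_{i+1})$ meet $b(H_i)$ in exactly the four edges of $A_i$.

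**Main obstacle.** The delicate point is the rerouting step when the optimal paths overlap: I must guarantee that $\Gamma$ can always be decomposed into three simple closed (in $G^\ast$) curves of the $S_p$-type with the prescribed endpoints on the three holes and with the right mutual (edge-)disjointness, so that the resulting regions $\Rscr(S_p)$ and the two $T$-regions tile the plane correctly and each $[S_p]$ meets each $b(H_j)$ in a single segment. This is where one genuinely leaves the class of regular metrics, and one has to be careful that the planar topology still produces a bona fide $K_{2,3}$-map — in particular that the two leftover regions really are the two sides of a $T_1$–$T_2$ separation and not something more complicated. I expect this to occupy the bulk of the argument; once the partition is in hand, the length bookkeeping and the check of~(iii) are routine.
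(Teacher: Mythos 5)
The overall skeleton you set up (take an optimal solution of task~\refeq{task}, read off regions $\Rscr_1,\Rscr_2,\Rscr_3$ and two leftover regions, let these induce $(T_1,T_2,S_1,S_2,S_3)$, and check semi-regularity, agreeability and $c(m)=\zeta(\Ascr)$) matches the paper. But the step you yourself flag as the ``main obstacle'' --- handling an optimal path system whose paths overlap or cross --- is exactly the core of the proof, and the route you propose for it does not work. You suggest imitating the type-2/type-3 cut analysis of Section~\SEC{veri_cut}: pass to the subgraph $\Gamma$ of edges used an odd number of times and re-route. The whole point of that parity argument in Section~\SEC{veri_cut} is that it does \emph{not} preserve the endpoint pattern: after re-routing one may land in the degenerate cases (b), (c), where some resulting path joins two terminals of the \emph{same} hole, and that is precisely why Section~\SEC{veri_cut} only yields lower bounds $\nu\le\mu$ with alternative outcomes rather than exact minima. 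For Proposition~\ref{pr:semiregular} you need an exact statement: the re-assembled system must still satisfy property ($\ast$) of~\refeq{task} (every path connects two \emph{different} holes, with exactly the prescribed multiset of endpoints), since otherwise the regions do not assemble into a $K_{2,3}$-partition agreeable to $\Ascr$ and the identity $c(m)=\zeta(\Ascr)$ is lost. You give no argument that your re-routing can be forced to respect ($\ast$), and in general it cannot.

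The paper fills this gap with two devices you do not have. First, it subdivides edges of $G$ (which changes neither $\Dscr(c,d)$ nor $\zeta(\Ascr)$) so that the four edges in each $A_i$ are distinct and so that an optimal path system may be assumed pairwise \emph{edge}-disjoint, replacing a dual edge used $k$ times by $k$ parallel copies. Second, and crucially, it removes crossings by a local recombination at each vertex $v$ guided by labels: every edge of $\Escr(v)$ is labeled $1,2,3$ according to which $\tilde Z_i$ the corresponding half-path reaches, and one repeatedly concatenates the half-paths of two \emph{consecutive} edges whose labels differ, chosen so that the third label occurs on strictly fewer than $|\Escr(v)|/2$ of the remaining edges. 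This counting condition is what guarantees that every concatenation (now and later) joins two different holes, so ($\ast$) and optimality are preserved while all crossings at $v$ disappear; only then do the non-crossing paths $P_p,Q_p$ bound regions $\Rscr_p$ that no other path enters, the endpoints fall into the cyclic order~\refeq{order}, and the partition can be read off. Without this uncrossing rule (or some equivalent argument preserving ($\ast$)), your construction of the regions, and hence of the semi-regular metric, is not justified; as written, the proposal leaves the decisive step as an expectation rather than a proof.
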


(When a (2,3)-metric $m$ is semi-regular but not regular, it is ``dominated by
two cuts'', in the sense that there are $X,Y\subset V$ such that
$\Delta_{c,d}(m)\ge \Delta_{c,d}(X)+\Delta_{c,d}(Y)$, cf.~\cite[Sec.~3]{K3hI}.)
\medskip

 \begin{proof}
We use the observation that problem $\Dscr(c,d)$ remains equivalent when an
edge $e$ is subdivided into several edges in series, say, $e_1,\ldots,e_k$
($k\ge 1$) with the same capacity: $c(e_i)=c(e)$. In particular, we can
subdivide edges in the boundaries of holes, due to which we may assume that
each quadruple $A_i$ consists of different edges. Then all terminals in each
$\tilde Z_i$ become different.

Another advantage is that when considering an optimal path system $\Pscr$
in~\refeq{task}, we may assume that the paths in $\Pscr$ are pairwise
edge-disjoint. Indeed, if some edge $e^\ast$ of $\hat G^\ast$ is used by $k>1$
paths in $\Pscr$, we can subdivide the corresponding edge $e$ of $G$ into $k$
edges in series. This leads to replacing $e^\ast$ by a tuple of $k$ parallel
edges (of the same length $c(e)$) and we assign each edge to be passed by
exactly one of those paths.

We need to improve $\Pscr$ so as to get rid of ``crossings''. More precisely,
consider two paths $P,P'\in\Pscr$, suppose that they meet at a vertex $v$, let
$e,e'$ be the edges of $P$ incident to $v$, and let $g,g'$ be similar edges of
$P'$. We say that $P$ and $P'$ \emph{cross} (each other) at $v$ if $e,g,e',g'$
occur in this order (clockwise or counterclockwise) around $v$, and
\emph{touch} otherwise.

For an inner (nonterminal) vertex $v$, let $\Pscr(v)$ be the set of paths in
$\Pscr$ passing $v$, and $\Escr(v)$ the clockwise ordered set of edges incident
to $v$ and occurring in $\Pscr(v)$. We assign to the edges in $\Escr(v)$ labels
1, 2 or 3, where an edge $e$ is labeled $i$ if for the path $P\in\Pscr(v)$
containing $e$, $P$ begins or ends at a terminal $z$ in $\tilde Z_i$ and $e$
belongs to the part of $P$ between $v$ and $z$. (So if $P$ connects $\tilde
Z_i$ and $\tilde Z_j$ and $e'$ is the other edge of $P$ incident to $v$, then
$e'$ has label $j$.)

We iteratively apply the following \emph{uncrossing operation}. Choose a vertex
$v$ with $|\Escr(v)|\ge 4$. Split each path of $\Pscr(v)$ at $v$. This gives,
for each edge $e\in \Escr(v)$ with label $i$, a path containing $e$ and
connecting $v$ with a terminal in $\tilde Z_i$; denote this path by $Q(e)$.
These paths are regarded up to reversing. Now we recombine these paths into
pairs as follows, using the obvious fact that for each $i=1,2,3$, the number of
edges in $\Escr(v)$ with label $i$ is at most $|\Escr(v)|/2$.

Choose two consecutive edges $e,e'$ in $\Escr(v)$ by the following rule: $e,e'$
have different labels, say, $i,j$, and the number of edges in $\Escr(v)$ having
the third label $k$ (where $\{i,j,k\}=\{1,2,3\}$) is strictly less than
$|\Escr(v)|/2$. (Clearly such $e,e'$ exist.) We concatenate $Q(e)$ and $Q(e)$,
obtaining a path connecting $\tilde Z_i$ and $\tilde Z_j$, update
$\Escr(v):=\Escr(v)-\{e,e'\}$, apply a similar procedure to the updated
$\Escr(v)$, and so on until $\Escr(v)$ becomes empty.

One can see that the resulting path system $\Pscr'$ satisfies property~($\ast$)
in~\refeq{task} and has the same total $c$-length as before (thus yielding an
optimal solution to~\refeq{task}), and now no two paths in $\Pscr'$ cross at
$v$. Note that for some vertices $w\ne v$, edge labels in $\Escr(w)$ may become
false; this may happen with those vertices $w$ that belong to paths in
$\Pscr'(v)$. For this reason, we finish the procedure of handling $v$ by
checking such vertices $w$ and correcting their labels where needed. In
addition, if we reveal that one or another path in $\Pscr'(v)$ is not simple,
we remove the corresponding closed subpath in it (which has zero $c$-length
since $\Pscr'$ is optimal).

At the next iteration we apply a similar uncrossing operation to another vertex
$v'$, and so on. Upon termination of the process (taking $<n$ iterations) we
obtain a path system $\tilde \Pscr$ such that
 \begin{numitem1} \label{eq:nocross}
$\tilde \Pscr$ is optimal to~\refeq{task} and admits no crossings.
  \end{numitem1}

Property~($\ast$) in~\refeq{task} implies that for each $p=1,2,3$, the sets
$\tilde Z_{p-1}$ and $\tilde Z_{p+1}$ are connected by exactly two paths in
$\tilde \Pscr$. We denote them by $P_p,Q_p$ and assume that both paths go from
$\tilde Z_{p-1}$ to $\tilde Z_{p+1}$ (reversing paths in $\tilde \Pscr$ if
needed). Since $P_p,Q_p$ nowhere cross, we can subdivide the space $\Rset^2-
(\hat H_{p-1}\cup \hat H_{p+1})$ into two closed regions $\Rscr,\Rscr'$ such
that $\Rscr\cap\Rscr'=P_p\cup Q_p$, ~$\Rscr$ lies ``on the right from $P_p$''
and ``on the left from $Q_p$'', while $\Rscr'$ behaves conversely. (Here we
give informal, but intuitively clear, definitions of $\Rscr,\Rscr'$, omitting a
precise topological description.) One of them does not contain the hole $\hat
H_p$; denote it by $\Rscr_p$. We observe the following:
  \begin{numitem1} \label{eq:Rscr_p}
no path in $\tilde \Pscr$ meets the interior $\inter(\Rscr_p)$ of $\Rscr_p$.
  \end{numitem1}

Indeed, if $P\in\tilde\Pscr$ goes across $\inter(\Rscr_p)$, then $P$ is
different from $P_p$ and $Q_p$; hence $P$ has one endvertex in $\tilde Z_p$.
Since $\tilde Z_p\cap\Rscr_p=\emptyset$, ~$P$ must cross the boundary of
$\Rscr_p$. This implies that $P$ crosses some of $P_p,Q_p$, contrary
to~\refeq{nocross}.
\smallskip

From~\refeq{Rscr_p} it follows that the interiors of $\Rscr_1,\Rscr_2,\Rscr_3$
are pairwise disjoint and that for $p=1,2,3$, the paths $P_p,Q_p$ begin at
consecutive terminals in $\tilde Z_{p-1}$ and end at consecutive terminals in
$\tilde Z_{p+1}$ (assuming as before that both paths go from $\tilde Z_{p-1}$
to $\tilde Z_{p+1}$). So we may assume for definiteness that
  \begin{numitem1} \label{eq:order}
for $i=1,2,3$, the terminals $z_i^1,z_i^2,z_i^3,z_i^4$ of $\tilde Z_i$ are,
respectively, the end of $P_{i-1}$, the end of $Q_{i-1}$, the beginning of
$Q_{i+1}$, and the beginning of $P_{i+1}$;
  \end{numitem1}
see the picture, where for simplicity all paths are vertex disjoint.

\vspace{-0.3cm}
\begin{center}
\includegraphics{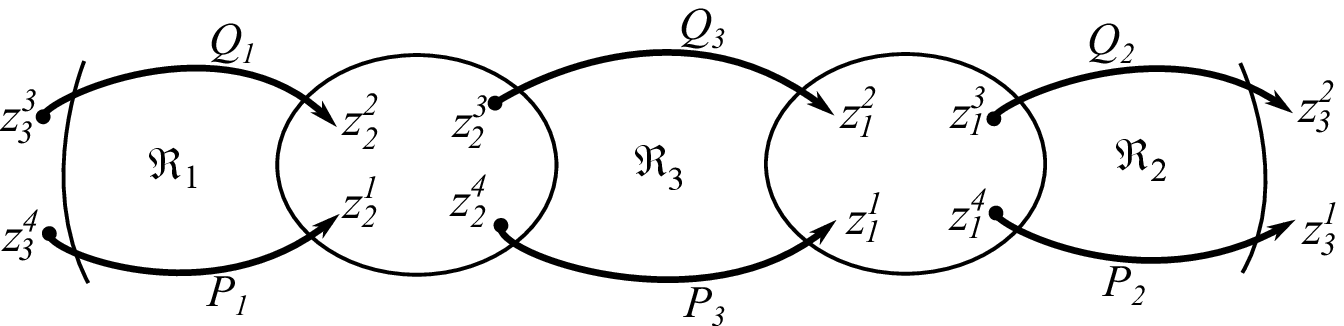}
\end{center}
\vspace{-0.3cm}

Then the space $\Rset^2- (\hat H_1\cup \hat H_2\cup \hat H_3\cup
\inter(\Rscr_1)\cup \inter(\Rscr_2)\cup\inter(\Rscr_3))$ can be subdivided into
two closed regions $\Lscr_1$ and $\Lscr_2$, where the former lies ``on the
right from $P_1,P_2,P_3$'' and the latter lies ``on the left from
$Q_1,Q_2,Q_3$''. One can see that
  \begin{numitem1} \label{eq:RL}
each edge of $P_p$ is shared by the regions $\Rscr_p$ and $\Lscr_1$, and each
edge of $Q_p$ is shared by $\Rscr_p$ and $\Lscr_2$.
  \end{numitem1}

Now the sets of faces in (the natural extensions to $G^\ast$ of) the regions
$\Lscr_1,\Lscr_2,\Rscr_1,\Rscr_2,\Rscr_3$ induce vertex sets
$T_1,T_2,S_1,S_2,S_3$ in $G$, respectively, giving a partition of $V$. Let $m$
be the (2,3)-metric determined by this partition. Then~\refeq{order} implies
that $m$ is semi-regular and agreeable to $\Ascr$. By~\refeq{RL}, for
$p=1,2,3$, each edge of $\delta(S_p)$ connects $S_p$ with one of $T_1,T_2$
(whereas no edge of $G$ connects $T_1$ and $T_2$, or connects $S_i$ and $S_j$
for $i\ne j$). Therefore,
  $$
  \zeta(\Ascr)=\sum(c(P_p)+c(Q_p)\colon p=1,2,3)=c(m),
  $$
yielding the proposition.
  \end{proof}

\noindent\textbf{Remark 4.} Strictly speaking, the metric $m$ in the above
proof concerns the modified graph, obtained by replacing some edges $e=uv$ of
the original graph $G$ by paths $L_e$ connecting $u$ and $v$. When returning to
the original $G$, those elements of $S_p$ or $T_q$ that are intermediate
vertices of such paths $L_e$ disappear, and as a result, there may appear
(original) edge connecting $T_1$ and $T_2$, or $S_i$ and $S_j$, $i\ne j$. One
can see, however, that this does not affect the value $c(m)$ for the
corresponding $m$.
\medskip

Finally, define $\tilde\Delta_{c,d}(\Ascr):=\zeta(\Ascr)-d(\Ascr)$. We conclude
with the following
  \begin{corollary} \label{cor:Delta23}
{\rm(i)} Let $\Ascr=(A_1,A_2,A_3)$, where $A_i$ is a quadruple of edges in
$b(H_i)$. Then $\Delta_{c,d}(m)\ge\tilde\Delta_{c,d}(\Ascr)$ for each regular
(2,3)-metric $m$ agreeable to $\Ascr$, and there exists a semi-regular
(2,3)-metric $m'$ agreeable to $\Ascr$ such that
$\Delta_{c,d}(m')=\tilde\Delta_{c,d}(\Ascr)$. In particular, if
$\tilde\Delta_{c,d}(\Ascr)<0$, then problem $\Dscr(c,d)$ has no solution.

{\rm(ii)} The minimum $\hat\mu_{c,d}$ of excesses $\Delta_{c,d}(m)$ over the
semi-regular (2,3)-metrics $m$ can be found in $O(n^{12}+n\cdot SP(n))$ time.
  \end{corollary}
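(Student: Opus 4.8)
Part~(i) should fall out directly from the results already in place. For a regular (2,3)-metric $m$ agreeable to $\Ascr$ one has $d(m)=d(\Ascr)$ and, by~\refeq{zetaA}, $c(m)\ge\zeta(\Ascr)$, so $\Delta_{c,d}(m)=c(m)-d(m)\ge\zeta(\Ascr)-d(\Ascr)=\tilde\Delta_{c,d}(\Ascr)$. For the existence claim, Proposition~\ref{pr:semiregular} produces a semi-regular $m'$ agreeable to $\Ascr$ with $c(m')=\zeta(\Ascr)$; the metric built in its proof meets each $b(H_i)$ in the canonical cyclic pattern of four arcs $S_{i-1},T_2,S_{i+1},T_1$, so the $b(H_i)$-contribution to $d(m')$ is exactly the combinatorial quantity $d_i(A_i)$, whence $d(m')=d(\Ascr)$ and $\Delta_{c,d}(m')=\tilde\Delta_{c,d}(\Ascr)$. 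Finally, if $\tilde\Delta_{c,d}(\Ascr)<0$ then $\Delta_{c,d}(m')<0$, which by Theorem~\ref{tm:3hole} (the (2,3)-metric condition must hold for \emph{all} (2,3)-metrics when $\Dscr(c,d)$ is solvable) forces $\Dscr(c,d)$ to be infeasible.

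For part~(ii) the algorithm is pure enumeration. Each $b(H_i)$ has $O(n)$ edges, so there are $O(n^{12})$ triples $\Ascr=(A_1,A_2,A_3)$ of quadruples. Once and for all, compute $c$-shortest paths between every pair of terminals of $\hat G^\ast$ ($O(n)$ single-source runs, $O(n\cdot SP(n))$ time) and precompute the $O(n^2)$ demand quantities that let each $d(\Ascr)$ be evaluated in $O(1)$. For a fixed $\Ascr$, solving~\refeq{task} amounts to choosing a perfect matching of the twelve terminals $\tilde Z_1\cup\tilde Z_2\cup\tilde Z_3$ by cross-hole pairs; such a matching necessarily uses two pairs between each of the three pairs of holes, so there are only $O(1)$ of them, and $\zeta(\Ascr)$ is the least total length of the corresponding six precomputed distances. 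Hence every $\tilde\Delta_{c,d}(\Ascr)=\zeta(\Ascr)-d(\Ascr)$ is obtained in $O(1)$, the whole computation runs in $O(n^{12}+n\cdot SP(n))$ time, and the algorithm returns $\min_{\Ascr}\tilde\Delta_{c,d}(\Ascr)$.

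It then remains to prove $\hat\mu_{c,d}=\min_{\Ascr}\tilde\Delta_{c,d}(\Ascr)$. The inequality ``$\le$'' is immediate from part~(i), since each $\tilde\Delta_{c,d}(\Ascr)$ is the excess of some semi-regular metric. For ``$\ge$'' I would show that every semi-regular $m$ satisfies $\Delta_{c,d}(m)\ge\tilde\Delta_{c,d}(\Ascr(m))$, where $\Ascr(m)$ is the triple of quadruples cut out of the hole boundaries by $\delta(S_{i-1}),\delta(S_{i+1})$. On the demand side one aims for $d(m)=d(\Ascr(m))$, and on the capacity side for $c(m)\ge\zeta(\Ascr(m))$; for the latter I would pass to the dual: the cuts $\delta(S_1),\delta(S_2),\delta(S_3)$, together with the edges joining $T_1$ and $T_2$ counted twice, yield a multigraph on $\hat G^\ast$ of total length $c(m)$ whose non-terminal vertices all have even degree --- using that $K_{2,3}$ is bipartite, so that the closed walk traced by $\sigma$ around any face has even length --- and whose odd-degree vertices are the terminals of $\Ascr(m)$; decomposing it into six simple paths plus closed walks and arguing, by the uncrossing/region-surgery of the proof of Proposition~\ref{pr:semiregular} now applied to a non-optimal system, that the six paths can be taken cross-hole gives an admissible solution of~\refeq{task}, hence $c(m)\ge\zeta(\Ascr(m))$.

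The hard part will be precisely this last step for semi-regular metrics that are not regular: then the subgraphs $[S_i]$ may be disconnected, a hole boundary may meet $T_1$ and $T_2$ in an interleaved fashion (so that $d(m)$ need not equal $d(\Ascr(m))$), and the dual multigraph need not be connected, so a greedy path decomposition may be forced to pair two terminals of the same hole. Handling this should require either normalising $m$ beforehand --- enlarging or merging the components of the $S_i$ and straightening the $T$-arcs so as to land in the class covered by Proposition~\ref{pr:semiregular} while not increasing $\Delta_{c,d}(m)$ --- or invoking that such a ``degenerate'' $m$ is dominated by cuts in the sense of the remark following that proposition. By comparison, the shortest-path reductions and the $O(n^{12}+n\cdot SP(n))$ accounting are routine.
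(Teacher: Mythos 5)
Your part~(i) is exactly the paper's (implicit) argument: the inequality is \refeq{zetaA} combined with $d(m)=d(\Ascr)$ for regular metrics agreeable to $\Ascr$; the existence statement is Proposition~\ref{pr:semiregular} together with the observation, which you verify correctly, that the metric constructed there meets each $b(H_i)$ in four pure arcs (so its $b(H_i)$-contribution to the demand side is $d_i(A_i)$ and hence $\Delta_{c,d}(m')=\tilde\Delta_{c,d}(\Ascr)$); and infeasibility when $\tilde\Delta_{c,d}(\Ascr)<0$ follows from Theorem~\ref{tm:3hole}. Your enumeration for part~(ii) --- $O(n^{12})$ triples $\Ascr$, all terminal-to-terminal distances from $O(n)$ single-source runs, $O(1)$ cross-hole pairings per $\Ascr$ (two paths between each pair of holes, by the degree count you give), and prefix-sum preprocessing of the quantities $d_i$ --- is also the intended procedure and yields the stated time bound.

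The step at which you stall, namely $\Delta_{c,d}(m)\ge\min_{\Ascr}\tilde\Delta_{c,d}(\Ascr)$ for an \emph{arbitrary} semi-regular $m$, is indeed not delivered by the material preceding the corollary, and your diagnosis of why the naive argument fails is accurate: if some $\Rscr(S_p)$ is not simply connected (e.g.\ $[S_p]$ disconnected), the dual image of $m$ (each $e^\ast$ taken $m(e)$ times) may only decompose into paths pairing terminals of the \emph{same} hole, so $c(m)\ge\zeta(\Ascr(m))$ is not immediate; and if a $T$-arc of some $b(H_i)$ mixes $T_1$ and $T_2$, then $d(m)$ and $d(\Ascr(m))$ are incomparable. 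Be aware, however, that the paper itself does not close this step: the corollary is stated without proof, and the only pointer to the degenerate case is the parenthetical remark after Proposition~\ref{pr:semiregular} that a semi-regular but non-regular metric is ``dominated by two cuts'', with a citation to \cite{K3hI} --- which is precisely the second remedy you sketch. So to make part~(ii) fully rigorous one must actually carry out that domination (or a normalization of $m$ that does not increase its excess) and check that it is compatible with the way $\hat\mu_{c,d}$ is later used jointly with the cut minima $\mu^1_{c,d},\nu^2_{c,d},\nu^3_{c,d}$ in Proposition~\ref{pr:nu}. Your write-up honestly flags this but does not supply it, so as it stands your proof of (ii) is incomplete at the same point the paper leaves implicit.
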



\section{Algorithm} \label{sec:alg}

As before, we assume that the capacity-demand pair $(c,d)$ is Eulerian.

The algorithm starts with verifying cut condition~\refeq{cut_cond} and
(2,3)-metric condition for the initial problem $\Dscr(G,\Hscr,D,c,d)$, using
the efficient procedures described in Sections~\SEC{veri_cut}
and~\SEC{veri_23}. If some condition is violated, we declare that the problem
has no solution. Otherwise the algorithm recursively constructs an integer
admissible multiflow. We may assume, w.l.o.g., that all current capacities and
demands are nonzero (for edges $e$ with $c(e)=0$ can be immediately deleted
from $G$, and similarly for pairs $st\in D$ with $d(st)=0$), and that the
boundary $b(H_i)$ of each hole $H_i$ is connected and isthmusless, regarding it
as a cycle.

An \emph{iteration} of the algorithm applied to current $G,\Hscr,D,c,d$ (with
$(c,d)$ Eulerian) chooses arbitrary $i\in\{1,2,3\}$, an edge $e=uv$ in
$b(H_i)$, and a pair $st\in D_i$ (where $D_i$ denotes the set of demand pairs
for $H_i$).

Let for definiteness $s,u,v,t$ follow in this order in $b(H_i)$. For an integer
$\eps\le\min\{c(e),d(st)\}$, let us transform $(c,d)$ into the capacity-demand
pair $(c',d')$ by
  \begin{gather} \label{eq:split}
  c'(e):= c(e)-\eps, \quad d'(st):=d(st)-\eps, \\
  d'(su):=d(su)+\eps, \quad\mbox{and} \quad d'(vt):=d(vt)+\eps. \nonumber
   \end{gather}
(Here we add to $D$ the demand pair $su$ with $d(su):= 0$ if it does not exist
there, and similarly for $vt$. When $s=u$ ($v=t$), the pair $su$ (resp. $vt$)
vanishes.) Clearly $(c',d')$ is Eulerian as well. We say that $(c',d')$ is
obtained by the $(e,st,\eps)$-\emph{reduction} of $(c,d)$. We call $\eps$ a
\emph{feasible reduction number} for $c,d,e,st$, or, simply, \emph{feasible},
if the problem $\Dscr(c',d')$ is still solvable (and therefore it has an
integer solution). The goal of the iteration is to find the maximum feasible
$\eps$ and then update $c,d$ accordingly.

Here we rely on the existence of an evident transformation of an integer
admissible multiflow $f'$ for $(c',d')$ into an integer admissible multiflow
$f$ for $(c,d)$: extract from $f'$ an integer subflow $g$ from $s$ to $u$ and
an integer subflow $h$ from $v$ to $t$, of value $\eps$ each, and increase the
flow between $s$ and $t$ by concatenating $g,h$ and the flow of value $\eps$
through the edge $e$.

The procedure of finding the maximum feasible $\eps$ consists of $O(1)$ steps.
We use a consequence from assertions in Sections~\SEC{veri_cut},~\SEC{veri_23}
(using notation from these sections).
  \begin{prop} \label{pr:nu}
Let $\eps\le c(e),d(st)$ and let $(c',d')$ be obtained by the
$(e,st,\eps)$-reduction of $(c,d)$. Suppose that $\Dscr(c,d)$ is solvable but
$\Dscr(c',d')$ is not. Let $\tilde\nu$ be the minimum of $\mu^1_{c',d'},
\nu^2_{c',d'}, \nu^3_{c',d'}, \hat\mu_{c',d'}$. Then $\tilde\nu$ is the minimum
excess for $(c',d')$ among the regular sets and semi-regular (2,3)-metrics.
  \end{prop}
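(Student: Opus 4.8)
The plan is to show that $\tilde\nu$ is simultaneously an upper bound and a lower bound for the minimum excess of $(c',d')$ over the class $\mathcal{K}$ of regular sets and semi-regular (2,3)-metrics. The lower bound is the easy direction: by Propositions~\ref{pr:type1}--\ref{pr:type3} we have $\mu^1_{c',d'}=\mu^1_{c',d'}$, $\nu^2_{c',d'}\le\mu^2_{c',d'}$, $\nu^3_{c',d'}\le\mu^3_{c',d'}$, and by Corollary~\ref{cor:Delta23}(ii), $\hat\mu_{c',d'}$ is exactly the minimum excess over semi-regular (2,3)-metrics. A regular (2,3)-metric is in particular semi-regular, and a regular set has some type $\alpha\in\{1,2,3\}$ with excess $\ge\mu^\alpha_{c',d'}\ge\nu^\alpha_{c',d'}$ (writing $\nu^1:=\mu^1$). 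Hence every member of $\mathcal{K}$ has excess at least $\tilde\nu$, so $\tilde\nu$ is a lower bound.

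The substance is the upper bound: some member of $\mathcal{K}$ has excess exactly $\tilde\nu$. Here I would use the hypothesis that $\mathcal{D}(c,d)$ is solvable but $\mathcal{D}(c',d')$ is not. By Theorem~\ref{tm:3h_sharp}, infeasibility of $\mathcal{D}(c',d')$ means some regular set or regular (2,3)-metric violates its condition, i.e. has negative excess w.r.t.\ $(c',d')$. Combined with the lower bound just established, this already forces $\tilde\nu<0$. Now I would argue, separately according to which of the four quantities attains the minimum $\tilde\nu$, that the corresponding witness lies in $\mathcal{K}$ and has excess precisely $\tilde\nu$. If $\tilde\nu=\mu^1_{c',d'}$, the witness is a regular set of type~1, directly in $\mathcal{K}$. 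If $\tilde\nu=\hat\mu_{c',d'}$, Corollary~\ref{cor:Delta23}(ii) gives a semi-regular (2,3)-metric attaining it, again in $\mathcal{K}$. The delicate cases are $\tilde\nu=\nu^2_{c',d'}$ and $\tilde\nu=\nu^3_{c',d'}$, where a priori $\nu^\alpha<\mu^\alpha$, so the computed bound need not be attained by a type-$\alpha$ set. Here I invoke the alternatives in Propositions~\ref{pr:type2} and~\ref{pr:type3}: in the "good" case~(i) the bound equals $\mu^\alpha_{c',d'}$ and is attained by an actual regular set of type $\alpha$; in the other cases the bound equals $\Delta_{c',d'}(X)+\Delta_{c',d'}(Y)$ (or a sum of three such terms) for regular sets $X,Y$ (of strictly smaller type). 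But then one of these summands is $\le\tilde\nu/\,2<0$ (or $\le\tilde\nu/3<0$), and since each summand is an excess of a regular set it is $\ge\mu^{\alpha'}_{c',d'}\ge\nu^{\alpha'}_{c',d'}\ge\tilde\nu$; with $\tilde\nu<0$ this squeeze is impossible unless that type $\alpha'$ had actually produced the minimum by itself --- so this branch does not occur when $\nu^\alpha$ is the unique achiever, and when it is not unique we fall back on whichever of the four quantities is attained by a genuine member of $\mathcal{K}$.

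The main obstacle I anticipate is exactly this case analysis for types~2 and~3: one must rule out, using $\tilde\nu<0$ and the inequalities $\nu^2_{c',d'},\nu^3_{c',d'}\le\mu^2_{c',d'},\mu^3_{c',d'}$ together with the "decomposition" alternatives, the possibility that the minimizing quantity is one of $\nu^2,\nu^3$ realized only through a sum of smaller-type excesses none of which individually equals $\tilde\nu$. The clean way to organize this is: let $\alpha^\ast$ be chosen so that $\tilde\nu$ is attained, and if $\tilde\nu=\nu^2_{c',d'}$ or $\nu^3_{c',d'}$ apply the relevant proposition; in alternative~(i) we are done, and in the other alternatives the exhibited regular sets give, by additivity of $\Delta_{c',d'}$ and the lower bound, another index among $\{1,2,3\}$ whose $\mu$-value is $\le\tilde\nu$, hence $=\tilde\nu$ by minimality, and that index is realized by an honest regular set. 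Iterating this reduction (it strictly decreases type) terminates at type~1, which is always attained. This yields a member of $\mathcal{K}$ of excess $\tilde\nu$, completing the proof.
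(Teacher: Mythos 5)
Your lower bound and the cases $\tilde\nu=\mu^1_{c',d'}$, $\tilde\nu=\hat\mu_{c',d'}$ are fine, but the handling of the delicate cases $\tilde\nu=\nu^2_{c',d'}<\mu^2_{c',d'}$ and $\tilde\nu=\nu^3_{c',d'}<\mu^3_{c',d'}$ contains a genuine gap. Your ``squeeze'' is not a contradiction: from $\Delta_{c',d'}(X)+\Delta_{c',d'}(Y)=\tilde\nu$ and $\Delta_{c',d'}(X),\Delta_{c',d'}(Y)\ge\mu^1_{c',d'}\ge\tilde\nu$ nothing impossible follows when $\tilde\nu<0$; for instance both summands could equal $\tilde\nu/2$, which is $\ge\tilde\nu$ precisely because $\tilde\nu$ is negative. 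For the same reason your iteration step fails: additivity only gives some summand $\le\tilde\nu/2$, hence $\mu^1_{c',d'}\le\tilde\nu/2$, not $\mu^1_{c',d'}\le\tilde\nu$, so you cannot conclude that another index attains $\tilde\nu$, and in the scenario above no regular set or semi-regular (2,3)-metric need have excess $\tilde\nu$ at all. In short, your argument uses the solvability of $\Dscr(c,d)$ only to get $\tilde\nu<0$, and that is not enough.

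The missing idea in the paper's proof is a locality observation that uses the solvability of $\Dscr(c,d)$ a second time. In case~(ii) of Proposition~\ref{pr:type2} the sets $X,Y$ are of type~1 and concern \emph{different} holes, while the $(e,st,\eps)$-reduction only alters data attached to the single hole $H_i$ containing $e,s,t$ (the pairs $uv,st,su,vt$ all lie on $b(H_i)$). Hence at least one of them, say $X$ with $\Hscr(X)\ne\{H_i\}$, separates none of $uv,st,su,vt$, so $\Delta_{c',d'}(X)=\Delta_{c,d}(X)\ge 0$ by the solvability of $\Dscr(c,d)$. Consequently the whole negative value is carried by the other set: $\Delta_{c',d'}(Y)\le\nu^2_{c',d'}=\tilde\nu$, which forces $\mu^1_{c',d'}\le\tilde\nu$ and hence $\mu^1_{c',d'}=\tilde\nu$, attained by a genuine regular set. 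The same argument applied to cases~(ii) and~(iii) of Proposition~\ref{pr:type3} (discarding the summands whose excess is unchanged, hence nonnegative) yields $\tilde\nu=\mu^\alpha_{c',d'}$ for some $\alpha\in\{1,2\}$. Without this step your case analysis does not close, so the proof as written is incomplete.
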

  \begin{proof}
Since $\Dscr(c',d')$ has no solution, at least one of $\mu^\alpha_{c',d'}$,
$\alpha=1,2,3$, and $\hat\mu_{c',d'}$ is negative (by
Theorems~\ref{tm:3hole},~\ref{tm:3h_sharp}). Also $\nu^\alpha_{c',d'}\le
\mu^\alpha_{c',d'}$ for $\alpha=2,3$ (by
Propositions~\ref{pr:type2},~\ref{pr:type3}). Hence $\tilde \nu<0$.

Suppose that $\tilde\nu=\nu^2_{c',d'}<\mu^2_{c',d'}$. Then we are in case~(ii)
of Proposition~\ref{pr:type2}; let $X,Y$ be as in this case. Then $X$ and $Y$
are of type 1 and concern different holes.  Therefore, under the transformation
$(c,d)\mapsto(c',d')$ the excess of one of $X,Y$ does not change. Indeed,
assuming for definiteness that $\Hscr(X)\ne \{H_i\}$ (where, as before, $e,s,t$
are in $b(H_i)$), we observe that none of $uv,st,su,vt$ is separated by $X$,
whence $\Delta_{c',d'}(X)=\Delta_{c,d}(X)$. Now since $\Delta_{c',d'}(X)+
\Delta_{c',d'}(Y)\le \nu^2_{c',d'}$ and $\Delta_{c,d}(X)\ge 0$ (as $\Dscr(c,d)$
is solvable), we have $\Delta_{c',d'}(Y)\le\nu^2_{c',d'}$. But then
$\mu^1_{c',d'}\le \Delta_{c',d'}(Y)$ and $\nu^2_{c',d'}=\tilde\nu\le
\mu^1_{c',d'}$ imply $\tilde \nu=\mu^1_{c',d'}$.

Next suppose that $\tilde \nu=\nu^3_{c',d'}<\mu^3_{c',d'}$. Then we are in
case~(ii) or~(iii) of Proposition~\ref{pr:type3}. Arguing as above, we can
conclude that there is a regular set $X'$ (which is one of $X_1,X_2,X_3$ in
case~(ii), and one of $X,Y$ in case~(iii)) such that
$\Delta_{c',d'}(X')\le\nu^3_{c',d'}$. This implies that
$\tilde\nu=\mu^\alpha_{c',d'}$ for some $\alpha\in\{1,2\}$.

Thus, in all cases we obtain $\tilde\nu=\min\{\mu^1_{c',d'}, \mu^2_{c',d'},
\mu^3_{c',d'}, \hat\mu_{c',d'}\}$.
  \end{proof}

The maximum feasible $\eps$ is computed in at most three steps. First we try to
take as $\eps$ the maximum possible value, namely,
$\eps_1:=\min\{c(e),d(st)\}$; let $c_1,d_1$ be defined as in~\refeq{split} for
this $\eps_1$. We determine the number $\tilde \nu_1:=\min\{\mu^1_{c_1,d_1},
\mu^2_{c_1,d_1}, \mu^3_{c_1,d_1}, \hat\mu_{c_1,d_1}\}$ (using procedures from
Sections~\SEC{veri_cut},\SEC{veri_23} and relying on Proposition~\ref{pr:nu}).
If $\tilde\nu_1\ge 0$ then $\eps:=\eps_1$ is as required.

And if $\tilde \nu_1<0$, we take  $\eps_2:=\eps_1+\lfloor \tilde \nu_1
/4\rfloor$, define $c_2,d_2$ as in~\refeq{split} for this $\eps_2$ and $c,d$ as
before, and find $\tilde \nu_2:=\min\{\mu^1_{c_2,d_2}, \mu^2_{c_2,d_2},
\mu^3_{c_2,d_2}, \hat\mu_{c_2,d_2}\}$ (step 2). Again, if $\tilde \nu_2\ge 0$
then $\eps_2$ is just the desired $\eps$.

Finally, if $\tilde \nu_2< 0$, we take as $\eps$ the number
$\eps_3:=\eps_2+\tilde \nu_2/2$ (step 3).
  \begin{lemma} \label{lm:3steps}
The $\eps$ determined in this way is indeed the maximum feasible reduction
number for $c,d,e,st$.
  \end{lemma}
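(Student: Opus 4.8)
The plan is to analyze how the minimum excess $\tilde\nu_{c',d'}$ (over regular sets and semi-regular $(2,3)$-metrics, which by Theorems~\ref{tm:3hole} and~\ref{tm:3h_sharp} controls solvability of $\Dscr(c',d')$) behaves as a function of the reduction parameter $\eps$. The key structural fact is that for each regular set $X$ and each semi-regular $(2,3)$-metric $m$, the excess $\Delta_{c',d'}(X)$ (resp. $\Delta_{c',d'}(m)$) changes by a fixed additive amount as $\eps$ increases by $1$, and this amount lies in a small bounded range. Concretely, in passing from $(c,d)$ to $(c',d')$ via~\refeq{split}, only the four quantities $c(e),d(st),d(su),d(vt)$ move, each by $\pm\eps$; since $X$ can separate each of the pairs $st,su,vt$ and can contain the edge $e=uv$ in its cut, and since the edges $su$-part, $ut$-part etc. on $b(H_i)$ are laid out cyclically, the quantity $c(\delta(X))-d(\rho(X))$ decreases by at most $4$ per unit of $\eps$ and never increases (one checks the four placements of the segment $[X]\cap b(H_i)$ relative to $s,u,v,t$). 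Similarly, for a semi-regular $(2,3)$-metric the relevant local quantity is $d_i(b_1,b_2,b_3,b_4)$, whose coefficients on $m(e),m(st),m(su),m(vt)$ are bounded, so $\Delta_{c',d'}(m)$ also decreases by at most $4$ (using that $m$ takes values in $\{0,1,2\}$ and $m(e)=|m(\sigma u)-m(\sigma v)|\le 1$ for an edge) and never increases. Hence $\tilde\nu_{c,d+\eps\text{-shift}}$ is a nonincreasing, concave, piecewise-linear function of $\eps$ with integer breakpoints and slopes in $\{0,-1,-2,-3,-4\}$.

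First I would record the monotonicity: if $\eps\le\eps'$ are both $\le\min\{c(e),d(st)\}$ then $\Dscr$ solvability at $\eps'$ implies solvability at $\eps$ (this is already implicit in the reversibility argument given before the Lemma, extracting subflows), so the feasible reduction numbers form an initial segment $\{0,1,\dots,\eps^*\}$ and $\eps^*$ is the maximum feasible value we seek. Next, by Proposition~\ref{pr:nu}, whenever $\Dscr(c,d)$ is solvable and $\Dscr(c'_\eps,d'_\eps)$ is not, the computed number $\tilde\nu_\eps:=\min\{\mu^1,\mu^2,\mu^3,\hat\mu\}$ at parameter $\eps$ equals the true minimum excess of $(c'_\eps,d'_\eps)$; and when $\Dscr(c'_\eps,d'_\eps)$ is solvable, all four of $\mu^1,\mu^2,\mu^3,\hat\mu$ at $\eps$ are $\ge0$ by Theorem~\ref{tm:3h_sharp}, so $\tilde\nu_\eps\ge0$. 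Thus $\tilde\nu_\eps\ge0$ if and only if $\eps$ is feasible, and $\eps^*$ is exactly the largest $\eps$ with $\tilde\nu_\eps\ge0$. Moreover, since $(c'_\eps,d'_\eps)$ is Eulerian, every $\Delta_{c'_\eps,d'_\eps}(X)$ and $\Delta_{c'_\eps,d'_\eps}(m)$ is even, so $\tilde\nu_\eps$ is always even.

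Now I would run the three-step analysis. Step 1 tests $\eps_1=\min\{c(e),d(st)\}$; if $\tilde\nu_1\ge0$ then $\eps_1$ is feasible and, being maximal possible, equals $\eps^*$. If $\tilde\nu_1<0$, then by the slope bound the function $\eps\mapsto\tilde\nu_\eps$ drops from its value at $\eps^*$ (which is $\ge0$) to $\tilde\nu_1<0$ over the interval $[\eps^*,\eps_1]$ with slope in $[-4,0]$ along the way; concavity forces $\tilde\nu_1\ge\tilde\nu_{\eps^*}+(-4)(\eps_1-\eps^*)\ge -4(\eps_1-\eps^*)$, i.e. $\eps_1-\eps^*\ge -\tilde\nu_1/4$, so $\eps_2:=\eps_1+\lfloor\tilde\nu_1/4\rfloor\le\eps^*$, meaning $\eps_2$ is feasible, hence $\tilde\nu_2\ge0$. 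Since also $\eps_2\le\eps_1$ and the slope on $[\eps_2,\eps^*]$ is at most $0$ with $\tilde\nu$ even and $\tilde\nu_{\eps^*}\ge0$, while on $(\eps^*,\eps_1]$ the slope is at most $-1$ but, being a drop of an even-valued concave function, in fact at most $-2$ per two—more carefully: $0\le\tilde\nu_2$ and $\tilde\nu_2\le\tilde\nu_{\eps^*}+0\cdot(\eps^*-\eps_2)$ is not quite it—so I would instead argue from below: on $[\eps^*,\eps_1]$ the slope is $\le -1$ hence $\le -2$ by evenness in two steps; one gets $\tilde\nu_2 - 0 \le (\text{slope})\cdot(\eps^* - \eps_2)$... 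The cleanest route, which I would use, is: the concave even integer function $h(\eps):=\tilde\nu_{\eps_1+\eps}$ (for $\eps\le 0$) has $h(0)=\tilde\nu_1<0$, is nondecreasing as $\eps$ decreases, with increments per unit step in $\{0,2,4\}$ and nondecreasing step-sizes; the first $\eps\le0$ with $h\ge0$ is $\eps^*-\eps_1$; from $h(0)=\tilde\nu_1$ and step-size $\le4$ we get $\eps^*-\eps_1\le\tilde\nu_1/4$ rounded, justifying $\eps_2$; then from $h$ at $\eps_2-\eps_1$ being $\tilde\nu_2\ge0$ and step-size now $\ge2$ (strict concavity past the first nonzero step, using evenness), $\eps^*-\eps_2\le\tilde\nu_2/2$, so $\eps_3:=\eps_2+\tilde\nu_2/2\le\eps^*$ is feasible; and the reverse bound $\eps_3\ge\eps^*$ follows because stepping from $\eps_3$ to $\eps_3+1$ changes $\tilde\nu$ by at least $-4$—no: by at most... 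The main obstacle, and the one deserving the most care, is precisely this bookkeeping of slopes: establishing (i) that each excess function has integer slope in $\{0,-1,-2,-3,-4\}$ and is concave, (ii) upgrading $-4$ to an effective $-2$ lower bound once the function has started strictly decreasing (via the evenness of $\Delta_{c',d'}$ in the Eulerian case, which makes consecutive odd slopes impossible to sustain and forces the decrements $\tilde\nu_1-\tilde\nu_{\eps_2}$ and $\tilde\nu_2-\tilde\nu_{\eps_3}$ to be the exact multiples used in the formulas), and (iii) checking that $\eps_3$ is not merely $\le\eps^*$ but equals it, i.e. that $\eps_3+1$ is infeasible, which uses that one more unit drops $\tilde\nu$ below $0$ given it was exactly $0$ at $\eps_3$ or the slope forces it. I would carry out (i) by the explicit case check on segment placements sketched above, (ii) by the parity remark already flagged in the paper ("$\Delta_{c,d}(X)$ and $\Delta_{c,d}(m)$ are even if $(c,d)$ is Eulerian"), and (iii) by combining concavity with the exactness in (ii); the verification that at termination $\tilde\nu_3\ge0$ yet $\tilde\nu_{\eps_3+1}<0$ then closes the proof that $\eps=\eps_3$ (or $\eps_1$ or $\eps_2$, whichever the procedure returned) is the maximum feasible reduction number.
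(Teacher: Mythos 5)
Your overall strategy is the same as the paper's: each excess $\Delta_{c',d'}(X)$, $\Delta_{c',d'}(m)$ is an affine function of $\eps$ (this is exactly \refeq{new_excess}, with coefficients $\beta(X)$, $\gamma(m)$), and the three-step formulas are then justified by slope bookkeeping. But the decisive part of your argument contains a genuine error of direction. From ``the minimum excess drops by at most $4$ per unit of $\eps$'' one gets $\tilde\nu_1\ge -4(\eps_1-\bar\eps)$, where $\bar\eps$ is the maximum feasible reduction number, hence $\bar\eps\le\eps_1+\tilde\nu_1/4$ and so, by integrality, $\bar\eps\le\eps_2=\eps_1+\lfloor\tilde\nu_1/4\rfloor$ --- i.e.\ the opposite of your conclusion ``$\eps_2\le\eps^\ast$, meaning $\eps_2$ is feasible, hence $\tilde\nu_2\ge0$.'' That claim is false (take a binding constraint of slope $-2$ with $\tilde\nu_1=-6$: then $\eps_2=\eps_1-2$ is still infeasible), and if it were true, step 3 of the algorithm would be vacuous, which should have been a warning sign. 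Your subsequent ``cleanest route'' flips the inequalities again and ends in explicitly unresolved clauses, so the exactness statements that actually constitute the lemma --- that when $\tilde\nu_2<0$ the constraint attaining $\tilde\nu_2$ has slope exactly $2$ (the slope-$4$ constraints having already been made nonnegative at $\eps_2$ because $\tilde\nu_1-4\lfloor\tilde\nu_1/4\rfloor\ge0$), that all constraints are nonnegative at $\eps_3=\eps_2+\tilde\nu_2/2$, and that $\eps_3+1$ is infeasible --- are never proved. This per-constraint endgame is precisely the content of the paper's proof, which first verifies $\beta(X)\in\{0,2\}$ and $\gamma(m)\in\{0,2,4\}$ and then argues constraint by constraint; your alternative of replacing even per-constraint slopes by evenness of the values $\tilde\nu_\eps$ (Eulerian parity) plus concavity of the pointwise minimum could be made to work, but you would still have to show that the slope is exactly $-2$ on $[\bar\eps,\eps_2]$ and that $\tilde\nu_{\bar\eps}=0$, which you do not do.

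A second, smaller gap sits under the very slope bound on which $\lfloor\tilde\nu_1/4\rfloor$ rests: you justify the bound for (2,3)-metrics by asserting $m(uv)\le 1$ for an edge $uv$, which is false (an edge may join $S_i$ and $S_j$, $i\ne j$, or $T_1$ and $T_2$, where $m=2$); a priori $\gamma(m)=m(su)+m(uv)+m(vt)-m(st)$ could be as large as $6$, and excluding $5$ and $6$ requires using the semi-regular structure of $m$ along $b(H_i)$ (or parity), i.e.\ exactly the check ``$\gamma(m)\in\{0,2,4\}$'' that the paper performs. Likewise ``never increases'' needs the triangle-inequality argument $\beta(X),\gamma(m)\ge 0$, which you only gesture at. So the skeleton is right, but the inequality reversal in step 2 and the missing exactness argument for $\eps_3$ are real gaps, not just presentational ones.
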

  \begin{proof}
We argue in a similar way as for an integer splitting in~\cite{K87}. For a
regular set $X\subset V$, define $\beta(X):=\omega_X(s,u)+\omega_X(u,v)
+\omega_X(v,t)-\omega_X(s,t)$, where we set $\omega_X(x,y):=1$ if $X$ separates
vertices $x$ and $y$, and 0 otherwise. For a semi-regular (2,3)-metric $m$,
define $\gamma(m):=m(su)+m(uv)+m(vt)-m(st)$. Then $\beta(X)\ge 0$ and
$\gamma(m)\ge 0$ (since both $\omega_X$ and $m$ are metrics). Moreover, one can
check that $\beta(X)\in\{0,2\}$ and ~$\gamma(m)\in\{0,2,4\}$, and that if
$(c'',d'')$ is obtained by the $(e,st,\eps')$-reduction of $(c',d')$ for an
arbitrary $\eps'$, then
  \begin{equation} \label{eq:new_excess}
  \Delta_{c'',d''}(X)=\Delta_{c',d'}(X)-\eps'\beta(X) \quad\mbox{and}\quad
  \Delta_{c'',d''}(m)=\Delta_{c',d'}(m)-\eps'\gamma(m).
  \end{equation}

Let $\bar \eps$ be the maximum feasible reduction number for $c,d,e,st$. When
$\tilde\nu_1\ge 0$, the equality $\bar \eps=\eps_1$ is obvious, so suppose that
$\tilde\nu_1<0$. If $\tilde\nu_1$ is achieved by the excess (w.r.t. $c_1,d_1$)
of a semi-regular (2,3)-metric $m$ and if $\gamma(m)=4$, then using the second
expression in~\refeq{new_excess} and the equality $\eps_2=\eps_1+\lfloor \tilde
\nu_1 /4\rfloor$, we have
  \begin{multline*}
  \Delta_{c_2,d_2}(m)=\Delta_{c,d}(m)-\eps_2\gamma(m)
   =\Delta_{c,d}(m)-\eps_1\gamma(m)-\lfloor\tilde\nu_1/4\rfloor\cdot 4 \\
   =\Delta_{c_1,d_1}(m)-\lfloor\tilde\nu_1/4\rfloor\cdot 4
      =\tilde\nu_1-\lfloor\tilde\nu_1/4\rfloor\cdot 4=\tau,
  \end{multline*}
where $\tau$ equals 0 if $\tilde\nu_1$ is divided by 4, and equals 2 otherwise.
(Recall that the excess of any (2,3)-metric is even when the capacity-demand
pair is Eulerian.) In this case we have $\bar\eps\le\eps_2$. Indeed for
$\eps':=\eps_2+1$, the pair $(c',d')$ obtained by the $(e,st,\eps')$-reduction
of $(c,d)$ would give $\Delta_{c',d'}(m)=\Delta_{c_2,d_2}(m)-4<0$; so $\eps'$
is infeasible.

As a consequence, in case $\tilde\nu_2\ge 0$ we obtain $\bar\eps=\eps_2$.

Now let $\tilde\nu_2< 0$. Note that for any semi-regular metric $m'$ with
$\gamma(m')=4$, the facts that $\gamma(m')=\gamma(m)$ and
$\Delta_{c_1,d_1}(m')\ge\tilde\nu_1=\Delta_{c_1,d_1}(m)$ imply that
$\Delta_{c',d'}(m')\ge\Delta_{c',d'}(m)\ge 0$ for any $(c',d')$ obtained by the
$(e,st,\eps')$-reduction of $(c,d)$ with $\eps'\le \eps_2$. Therefore,
$\tilde\nu_2$ is achieved by either a set $X$ with $\beta(X)=2$ or a
semi-regular (2,3)-metric $m''$ with $\gamma(m'')=2$. This implies
$\bar\eps=\eps_2+\tilde \nu_2/2$.
  \end{proof}

Also the above procedure of computing $\eps$ together with the complexity
results in Sections~\SEC{veri_cut} and~\SEC{veri_23} gives the following
  \begin{corollary} \label{cor:iter}
Each iteration (finding the corresponding maximum reduction number and reducing
$c,d$ accordingly) takes $O(n^{12})$ time.
  \end{corollary}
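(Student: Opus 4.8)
The plan is to unwind the definition of an iteration and to aggregate the complexity bounds established in Sections~\SEC{veri_cut} and~\SEC{veri_23}. By construction, a single iteration applied to current $G,\Hscr,D,c,d$ consists of: (a)~the procedure for computing the maximum feasible reduction number $\eps$ for $c,d,e,st$, which by Lemma~\ref{lm:3steps} terminates after at most three steps; and (b)~the final update of $c,d$ according to~\refeq{split}, which is a constant number of arithmetic operations on the values $c(e),d(st),d(su),d(vt)$ together with the $O(n)$ housekeeping of deleting zero-capacity edges and zero-demand pairs. Since (b) costs $O(n)$, it suffices to bound the cost of one step of part~(a).

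Each step of part~(a) takes a reduced pair $(c',d')$ (one of $(c_1,d_1)$, $(c_2,d_2)$, and so on) living on the \emph{same} supply graph $G$: the reduction~\refeq{split} modifies only the four values $c(e),d(st),d(su),d(vt)$ and possibly adjoins at most two demand pairs having zero initial demand, so $V$, $E$, the embedding, the dual graph $\hat G^\ast$, and its edge lengths are unchanged up to $O(1)$ local modifications, and in particular $\hat G^\ast$ remains a planar graph on $O(n)$ nodes. The step then computes the four quantities $\mu^1_{c',d'}$, $\nu^2_{c',d'}$, $\nu^3_{c',d'}$, $\hat\mu_{c',d'}$, whose minimum is, by Proposition~\ref{pr:nu}, exactly the threshold $\tilde\nu=\min\{\mu^1_{c',d'},\mu^2_{c',d'},\mu^3_{c',d'},\hat\mu_{c',d'}\}$ the step needs. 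Their costs are furnished by the earlier results: $O(n^2+n\cdot SP(n))$ for $\mu^1_{c',d'}$ (Proposition~\ref{pr:type1}), $O(n^4+n\cdot SP(n))$ for $\nu^2_{c',d'}$ (Proposition~\ref{pr:type2}), $O(n^6+n\cdot SP(n))$ for $\nu^3_{c',d'}$ (Proposition~\ref{pr:type3}), and $O(n^{12}+n\cdot SP(n))$ for $\hat\mu_{c',d'}$ (Corollary~\ref{cor:Delta23}(ii)). As $SP(n)=O(n\log n)$ (indeed $O(n)$) for a planar graph on $O(n)$ nodes, we have $n\cdot SP(n)=O(n^2\log n)$, which is dominated by $n^{12}$; hence one step of part~(a) runs in $O(n^{12})$ time. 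With at most three such steps plus the $O(n)$ work of part~(b), the whole iteration therefore takes $O(n^{12})$ time.

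There is essentially no delicate point in this argument beyond correctly assembling the preceding bounds; the two facts that must be invoked with care are that the number of steps of part~(a) is $O(1)$ — which is precisely the content of Lemma~\ref{lm:3steps} — and that passing from $(c,d)$ to a reduced pair $(c',d')$ never enlarges the instance, which is immediate from~\refeq{split}. Together these yield the claimed $O(n^{12})$ bound per iteration, proving the corollary.
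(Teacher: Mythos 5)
Your proposal is correct and matches the paper's (implicit) argument: the paper derives Corollary~\ref{cor:iter} directly from the fact that an iteration performs at most three computations of $\tilde\nu$, each reducing to the bounds of Propositions~\ref{pr:type1}--\ref{pr:type3} and Corollary~\ref{cor:Delta23}(ii), of which the $O(n^{12}+n\cdot SP(n))$ term for $\hat\mu_{c',d'}$ dominates. Your only slight imprecision—crediting the three-step bound to Lemma~\ref{lm:3steps}, which actually establishes correctness of the computed $\eps$ rather than the step count—is immaterial to the complexity claim.
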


Next, considering~\refeq{new_excess} and using the facts that
$\beta(X),\gamma(m)\ge 0$, we can conclude that under a reduction as above the
excess of any set or (2,3)-metric does not increase. This implies that
  \begin{numitem1} \label{eq:crit}
if an iteration handles $c,d,e,st$, then for any capacity-demands $(c',d')$
arising on subsequent iterations, the maximum reduction number for $c',d',e,st$
is zero.
  \end{numitem1}

Therefore, it suffices to consider each pair $(e,st)$ at most once during the
process.

Now we finish our description as follows. Suppose that, at an iteration with
$i,e,st$, the capacity of $e$ becomes zero and the deletion of $e$ from $G$
causes merging $H_i$ with another hole $H_j$. Then we can proceed with an
efficient procedure for solving the corresponding Eulerian 2-hole demand
problem. Similarly, if the demand on $st$ becomes zero and if the deletion of
$st$ makes $D_i$ empty, then we can withdraw the hole $H_i$, again obtaining
the Eulerian 2-hole case.

Finally, suppose that we have the situation when for some $c,d$, the holes
$H_1,H_2,H_3$ are different (and the capacities of all edges are positive),
each $D_1,D_2,D_3$ is nonempty, but the maximum feasible reduction number for
any corresponding pair $e,st$ is zero. We assert that this is not the case.

Indeed, suppose such $c,d$ exist. The problem $\Dscr(c,d)$ is solvable, and one
easily shows that there exists an integer solution $f=(\Pscr,\lambda)$ to it
such that: for some path $P\in\Pscr$ with $\lambda(P)>0$, some edge $e$ of $P$
belongs to the boundary of the same hole $H_i$ that contains the ends
$s_P,t_P$. But this implies that $s_Pt_P\in D_i$ and that $\eps=1$ is feasible
for $c,d,e,s_Pt_P$; a contradiction.

Thus, we obtain the following
  \begin{theorem} \label{tm:alg}
The above algorithm terminates in $O(n^3)$ iterations and finds an integer
solution to $\Dscr(G,\Hscr,D,c,d)$ with $|\Hscr|=3$ and $(c,d)$ Eulerian.
  \end{theorem}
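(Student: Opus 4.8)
The plan is to establish the two claims in Theorem~\ref{tm:alg} separately: the iteration bound and the correctness of the output. Correctness is already nearly in hand from the preceding development. By Corollary~\ref{cor:iter} each iteration correctly computes the maximum feasible reduction number~$\bar\eps$ and performs the $(e,st,\bar\eps)$-reduction, which (by the definition of feasibility and Theorem~\ref{tm:3hole}) preserves solvability of an Eulerian problem and hence preserves the existence of an integer solution. The transformation of an integer admissible multiflow for $(c',d')$ back to one for $(c,d)$ was described right after~\refeq{split} (extract a subflow of value $\eps$ from $s$ to $u$, one from $v$ to $t$, splice them with $\eps$ copies of the edge~$e$), so once the recursion bottoms out in a $1$- or $2$-hole case---handled by the known algorithms of Okamura--Seymour and Okamura cited in~(A)---we reconstruct an integer solution by unwinding the reductions. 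The only gap is to argue that the recursion does bottom out, i.e.\ that we never get stuck with three genuine holes and no further progress possible; this is exactly the ``we assert that this is not the case'' paragraph, and I would present its argument carefully: if all $D_i$ are nonempty and all capacities positive in a solvable Eulerian $3$-hole instance, pick any integer solution $f=(\Pscr,\lambda)$, take any $D$-path $P\in\Pscr$ with $\lambda(P)>0$ and endpoints on some $H_i$; since $b(H_i)$ is a cycle carrying positive capacity and $P$ must leave and return near $H_i$, one shows $P$ (or a rerouting of it) uses an edge $e$ of $b(H_i)$, and then $\eps=1$ is feasible for that triple, contradicting maximality~$=0$.

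For the iteration count, the key observation is~\refeq{crit}: because $\beta(X),\gamma(m)\ge 0$ in~\refeq{new_excess}, a reduction never increases any excess, so once an iteration handling $(e,st)$ has driven the maximum feasible reduction number for that pair to zero, it stays zero forever. Hence each pair $(e,st)$ with $e\in b(H_i)$ and $st\in D_i$ is processed at most once. The number of such pairs is $O(n)$ edges on hole boundaries times $O(n^2)$ demand pairs, giving $O(n^3)$; combined with the $O(1)$ recursive descents into the $1$- and $2$-hole subproblems (each of which runs a known polynomial algorithm), the total is $O(n^3)$ iterations. I would also note that the reductions may create new demand pairs $su,vt$ (as in~\refeq{split}), so ``$O(n^2)$ demand pairs'' must be read as a bound on the support size at any moment; since each iteration adds at most two new pairs and there are $O(n^3)$ iterations this keeps the relevant counts polynomial, and the per-iteration cost $O(n^{12})$ from Corollary~\ref{cor:iter} then yields the overall polynomiality (stated separately from the iteration bound).

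The main obstacle I anticipate is making the ``stuck case is impossible'' argument fully rigorous: one must produce, from \emph{some} integer solution, a $D$-path that both has its ends on a hole $H_i$ \emph{and} traverses an edge of $b(H_i)$ (equivalently, exhibit $e\in b(H_i)$, $st\in D_i$ with $s,u,v,t$ appearing in this cyclic order on $b(H_i)$ and $\min\{c(e),d(st)\}\ge 1$). The subtlety is that a shortest $D$-path need not touch $b(H_i)$ at all except at its endpoints, so one has to rechoose the solution---e.g.\ route some unit of the $st$-demand greedily along $b(H_i)$, which is possible precisely because $b(H_i)$ is an isthmusless cycle all of whose edges have positive residual capacity after serving the rest---or argue via an exchange/uncrossing on $f$ inside the face $H_i$. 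I would handle this by invoking the Eulerian structure and the $1$-hole Okamura--Seymour machinery locally around $H_i$ rather than reproving it; the remaining pieces---bounding iterations via~\refeq{crit}, reconstructing multiflows through reductions, and bottoming out to the $2$-hole case---are routine given the results already established.
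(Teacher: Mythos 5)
Your proposal follows the paper's own argument essentially step for step: the $O(n^3)$ iteration bound comes from \refeq{crit} together with the count of pairs $(e,st)$ with $e$ on a hole boundary and $st$ a vertex pair on the same hole (each handled at most once), correctness comes from unwinding the $(e,st,\eps)$-reductions and falling back to the known $2$-hole algorithms when a hole disappears or merges, and the impossibility of getting stuck is exactly the paper's closing assertion that some integer solution contains a flow path using an edge of the boundary of its own hole, which the paper likewise leaves at the level of ``one easily shows''. One caveat: your concrete suggestion for that step---greedily rerouting a unit of the $st$-demand along $b(H_i)$ because its edges have ``positive residual capacity after serving the rest''---is not valid as stated, since positive capacity does not imply positive residual capacity once the other demands are routed, so if you spell this out you will need the exchange/uncrossing-type argument you mention as the alternative.
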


In conclusion of this paper, recall that when $|\Hscr|=4$ and $(c,d)$ is
Eulerian, the solvability of $\Dscr(c,d)$ implies the existence of a
\emph{half-integer} solution (see (C) in the Introduction). An \emph{open
question}: does there exist a polynomial-time (not necessarily ``purely
combinatorial'') algorithm to find such a solution? (Note that the solvability
of $\Dscr(c,d)$ can be verified in strongly polynomial time, by using a version
of ellipsoid method.)

\end{document}